\theoremstyle{plain}
\newtheorem{thm}{Theorem}[section]
\newtheorem{prop}[thm]{Proposition}
\newtheorem{lemma}[thm]{Lemma}
\newtheorem{cor}[thm]{Corollary}
\newtheorem*{que}{Question}
\theoremstyle{definition}
\newtheorem{defn}[thm]{Definition}
\theoremstyle{remark}
\newtheorem*{rmk}{Remark}
\newtheorem*{rmks}{Remarks}
\xpatchcmd{\proof}{\@addpunct{.}}{\@addpunct{:}}{}{}
\let\@@pmod\pmod
\DeclareRobustCommand{\pmod}{\@ifstar\@pmods\@@pmod}
\def\@pmods#1{\mkern4mu({\operator@font mod}\mkern 6mu#1)}
\def\C{\mathbb{C}}
\def\H{\mathbb{H}}
\newcommand{\Z}{\mathbb{Z}}
\newcommand{\Q}{\mathbb{Q}}
\newcommand{\N}{\mathbb{N}}
\newcommand{\R}{\mathbb{R}}
\newcommand{\tp}{\theta_{\psi}}
\newcommand{\tc}{\theta_{\chi}}
\newcommand{\ti}{\theta_{\mathbbm{1}}}
\newcommand{\Mp}{M_{\psi}}
\def\Mc{M_{\chi}}
\newcommand{\lcm}{\textnormal{lcm}}
\newcommand{\sol}{\sigma^{\text{sm}}_{1,\psi}}
\newcommand{\sdf}{\sigma^{\text{sm}}_{2,\chi}}
\newcommand{\sdft}{\sigma^{\text{sm}}_{2,\mathbbm{1}}}
\newcommand{\Ec}{\mathcal{E}}
\newcommand{\Fc}{\mathcal{F}}
\newcommand{\Gc}{\mathcal{G}}
\newcommand{\Fcp}{\mathcal{F}^{+}}
\newcommand{\Fcm}{\mathcal{F}^{-}}
\newcommand{\Gcp}{\mathcal{G}^{+}}
\newcommand{\Gcm}{\mathcal{G}^{-}}
\newcommand{\Hcp}{\mathcal{H}^{+}}
\newcommand{\Pc}{\mathcal{P}}
\newcommand{\slz}{{\text {\rm SL}}_2(\mathbb{Z})}
\newcommand{\re}{\textnormal{Re}}
\DeclarePairedDelimiter\floor{\lfloor}{\rfloor}
\title{Polar harmonic Maa{\SS} forms and holomorphic projection}
\author{Joshua Males}
\address{450 Machray Hall, Department of Mathematics, University of Manitoba, Winnipeg,
	Canada}
\email{joshua.males@umanitoba.ca}
\author{Andreas Mono}
\address{Department of Mathematics and Computer Science, Division of Mathematics, University of Cologne, Weyertal 86-90, 50931 Cologne, Germany}
\email{amono@math.uni-koeln.de}
\author{Larry Rolen}
\address{Department of Mathematics, 1420 Stevenson Center, Vanderbilt University, Nashville, TN 37240}
\email{larry.rolen@vanderbilt.edu}
\begin{document}


\begin{abstract}
Recently, Mertens, Ono, and the third author studied mock modular analogues of Eisenstein series. Their coefficients are given by small divisor functions, and have shadows given by classical Shimura theta functions. Here, we construct a class of small divisor functions $\sdf$ and prove that these generate the holomorphic part of polar harmonic Maa{\ss} forms of weight $\frac{3}{2}$. To this end, we essentially compute the holomorphic projection of mixed harmonic Maaß forms in terms of Jacobi polynomials, but without assuming the structure of such forms. Instead, we impose translation invariance and suitable growth conditions on the Fourier coefficients. Specializing to a certain choice of characters, we obtain an identity between $\sdft$ and Hurwitz class numbers, and ask for more such identities. Moreover, we prove $p$-adic congruences of our small divisor functions when $p$ is an odd prime. If $\chi$ is non-trivial we rewrite the generating function of $\sdf$ as a linear combination of Appell-Lerch sums and their first two normalized derivatives. Lastly, we offer a connection of our construction to meromorphic Jacobi forms of index $-1$ and false theta functions.
\end{abstract}

\subjclass[2020]{11F37, 11F30, 11F03}

\keywords{Appell-Lerch sums, Harmonic Maa{\ss} forms, Holomorphic projection, Hurwitz class numbers, Jacobi polynomials, Partial theta functions, Shimura theta function, Small divisor function}

\thanks{The research conducted by the first author for this paper is supported by the Pacific Institute for the Mathematical Sciences (PIMS). The research and findings may not reflect those of the Institute.}

\maketitle

\section{Introduction and statement of results}
A recent paper by Mertens, Ono, and the third author \cite{mertens2019mock} defined and investigated a new type of mock modular form. Their construction is motivated by work of Hecke \cite{hecke}, whose results imply that the functions
\begin{align*}
\frac{1}{2}L\left(1-k, \phi\right) + \sum_{n \geq 1} \left(\sum_{d \mid n} \phi(d)d^{k-1}\right)q^n, \qquad \sum_{n \geq 1} \left(\sum_{d \mid n} \phi\left(\frac{n}{d}\right)d^{k-1}\right)q^n
\end{align*}
are holomorphic weight $k$ modular forms on $\Gamma_0(N)$ with Nebentypus $\phi$ if $k > 2$, where $\phi$ is any primitive Dirichlet character of modulus $N$ satisfying $\phi(-1) = (-1)^k$. Here and throughout, we let $\tau = u+iv \in \H$ and $q \coloneqq \mathrm{e}^{2\pi i \tau}$. The notation $L(s, \phi)$ refers to the Dirichlet $L$-function of $\phi$. Mertens, Ono, and the third author focussed on the case $k=2$, and to this end defined a different class of twisted and restricted versions of classical divisor sums 
\begin{align*}
\sigma_{k-1}(n) \coloneqq \sum_{d \mid n} d^{k-1}.
\end{align*}
Since $\sigma_{k-1}(n)$ is a Fourier coefficient of the classical holomorphic Eisenstein series $E_k$ for even $k \geq 2$, they called these ``mock modular Eisenstein series''. Following the setting of \cite{mertens2019mock}, let $\psi$ be any non-trivial Dirichlet character of modulus $\Mp > 1$, define the set of admissible ``small'' divisors
\begin{align*}
D_n \coloneqq \left\{ d \mid n \ \colon 1 \leq d \leq \frac{n}{d} \text{ and } d \equiv \frac{n}{d} \pmod*{2}  \right\},
\end{align*}
and the \textit{small divisor function}
\begin{align*}
\sol (n) \coloneqq \sum_{d \in D_n} \psi\left( \frac{\left(\frac{n}{d}\right)^2 -d^2 }{4}\right) d.
\end{align*}
In addition, let
\begin{align*}
\lambda_{\psi} \coloneqq \frac{1-\psi(-1)}{2} \in \{0,1\}
\end{align*}
depending on the parity of $\psi$, let $\chi_{-4}$ be the unique odd character of modulus $4$, and we recall Shimura's theta function
\begin{align*}
\tp(\tau) \coloneqq \frac{1}{2}\sum_{n \in \Z} \psi(n)n^{\lambda_{\psi}}q^{n^2}.
\end{align*}
Then the main result of \cite{mertens2019mock} states that the function
\begin{align*}
\Ec^+(\tau) \coloneqq \frac{1}{\tp(\tau)}\left(\alpha_{\psi}E_2(\tau) + \sum_{n \geq 1} \sol(n) q^n \right)
\end{align*}
can be completed to a \textit{polar} harmonic Maa{\ss} form of weight $\frac{3}{2}-\lambda_{\psi}$ on $\Gamma_0(4\Mp^2)$ with Nebentypus $\overline{\psi} \cdot \chi_{-4}^{\lambda_{\psi}}$, where $\alpha_{\psi}$ is an implicit constant to ensure a certain growth condition. That is, it has the transformation and analytic properties of a harmonic Maa{\ss} form, but it may have poles on the upper half plane arising from the $\tp(\tau)$ in the denominator. Similar ideas have been utilized for specific examples before by Andrews, Rhoades, Zwegers \cite{anrhzwe}, and by Bringmann, Kane, Zwegers \cite{brikazwe} for instance. Additionally, the authors of \cite{mertens2019mock} presented some special choices of $\psi$ where their polar harmonic Maa{\ss} forms happen to have no poles on $\H$, and offer a $p$-adic property of $\Ec^+$ for primes $p > 3$. 

A natural question is whether there are more classes of small divisor functions for which a similar phenomenon to the setting of \cite{mertens2019mock} appears. Our two main results give such generalizations. We let $\chi$ be a second Dirichlet character of modulus $\Mc$ and define our small divisor function by
\begin{align*}
\sdf (n) &\coloneqq \sum_{d \in D_n} \chi\left(\frac{\frac{n}{d}-d}{2}\right)\psi\left(\frac{\frac{n}{d}+d}{2}\right)d^2.
\end{align*}
We stipulate that $\psi$ is odd and fixed throughout, and thus omit the dependency of $\sdf$ on $\psi$. We moreover define $\Fc(\tau) \coloneqq \Fcp(\tau) + \Fcm(\tau)$, where
\begin{align*}
\Fcp(\tau) \coloneqq \frac{1}{\tp(\tau)}\sum_{n \geq 1}\sdf(n)q^n, \qquad
\Fcm(\tau) \coloneqq \frac{i}{\pi \sqrt{2}} \int_{-\overline{\tau}}^{i\infty}\frac{\tc\left(w\right)}{\left(-i\left(w+\tau\right)\right)^{\frac{3}{2}}} dw.
\end{align*}
We obtain the following result.
\begin{thm}\label{thm:main}
If $\chi$ is even and non-trivial then the function $\Fc$ is a polar harmonic Maa{\ss} form of weight $\frac{3}{2}$ on $\Gamma_0\left(\lcm\left(4\Mc^2,4\Mp^2\right)\right)$ with Nebentypus $\overline{\chi} \cdot \left(\psi \cdot \chi_{-4}\right)^{-1}$. Its shadow is given by $\frac{1}{2\pi} \theta_{\overline{\chi}}$.
\end{thm}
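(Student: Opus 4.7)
The strategy is to verify the three defining properties of a polar harmonic Maa{\ss} form of weight $\frac{3}{2}$: the modular transformation, harmonicity (with poles on $\H$ allowed coming from the zeros of $\tp$), and the shadow. The shadow is the most direct: since $\Fcp$ is holomorphic, $\xi_{3/2}\Fcp = 0$, while differentiating the integral defining $\Fcm$ with respect to $\overline{\tau}$ (Leibniz rule applied to the lower limit $-\overline{\tau}$) yields $\partial_{\overline{\tau}}\Fcm(\tau) = \frac{i}{\pi\sqrt{2}}\cdot\frac{\tc(-\overline{\tau})}{(2v)^{3/2}}$, so that $\xi_{3/2}\Fcm(\tau) = \frac{1}{2\pi}\theta_{\overline{\chi}}(\tau)$ as required. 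Harmonicity then follows because $\Fcp$ is holomorphic and $\Fcm$ is a standard non-holomorphic Eichler-type integral, which is annihilated by the weight-$\frac{3}{2}$ Laplacian off the zero set of $\tp$.

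The heart of the proof is modularity. Since $\psi$ is odd, $\tp$ is a holomorphic modular form of weight $\frac{3}{2}$ on $\Gamma_0(4\Mp^2)$ with Nebentypus $\psi\cdot\chi_{-4}$. Multiplication by $\tp$ therefore reduces the claim to showing that the weight-$3$ auxiliary function $\Gc := \tp\cdot\Fc$ transforms modularly on $\Gamma_0(4\Mc^2)\cap\Gamma_0(4\Mp^2)$ with Nebentypus $\overline{\chi}$, while the zeros of $\tp$ in $\H$ precisely account for the poles of $\Fc$. Writing $\Gc = \Gcp + \Gcm$ with $\Gcp := \sum_{n\geq 1}\sdf(n)q^n$ and $\Gcm := \tp\cdot\Fcm$, the object $\Gc$ is a candidate mixed harmonic Maa{\ss} form of weight $3$ whose putative shadow equals $\frac{1}{2\pi}\tp\cdot\theta_{\overline{\chi}}$.

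To confirm $\Gc$ is genuinely modular, one computes the holomorphic projection $\pi_{\mathrm{hol}}(\Gcm)$ and matches it to $\Gcp$ up to an explicit classical piece. Concretely, substituting the Fourier expansion $\tp(\tau) = \frac{1}{2}\sum_m \psi(m)m q^{m^2}$ and expanding $\Fcm$ via the termwise $q$-expansion of $\tc$ under the integral yields a doubly-indexed expression in $(m,\ell)$. Reparametrizing by $d = m-\ell$ and $n/d = m+\ell$ (so $n = m^2-\ell^2$) produces the parity condition $d \equiv n/d \pmod*{2}$ and the character product $\chi\!\left(\frac{n/d-d}{2}\right)\psi\!\left(\frac{n/d+d}{2}\right)$, while the weighting $d^2$ and the smallness condition $d \leq n/d$ emerge from evaluating the residual $v$-integrals in closed form through an identity involving Jacobi polynomials.

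The main obstacle is carrying out this holomorphic projection without first knowing that $\Gc$ is modular, as highlighted in the abstract. To circumvent this, one imposes translation invariance and suitable growth on the Fourier coefficients of the putative $\Gcp$, which legitimizes the termwise operations, and then applies the Jacobi polynomial evaluation to identify the coefficients of $\pi_{\mathrm{hol}}(\Gcm)$ explicitly as $\sdf(n)$. Once this identification is in place, the transformation defect of $\Gcm$ is cancelled exactly by $\Gcp$, so $\Gc$ is modular of weight $3$ with Nebentypus $\overline{\chi}$; dividing by $\tp$ then delivers the polar harmonic Maa{\ss} form $\Fc$ of weight $\frac{3}{2}$ on the stipulated group with the stipulated Nebentypus and shadow.
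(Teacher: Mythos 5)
Your proposal follows essentially the same route as the paper: the shadow and harmonicity are read off directly from the Eichler integral, and modularity is obtained by passing to the weight-$3$ product $\tp\Fc$, showing via the holomorphic projection of $\tp\Fcm$ (evaluated through the Jacobi-polynomial identity and the reparametrization $d=m-\ell$, $n/d=m+\ell$) that $\pi_3(\tp\Fc)=0$, and then combining this with the modularity of the shadow to force modularity of the product --- exactly the mechanism of the paper's Proposition \ref{Prop:Mod}. The only item you leave out is the verification of governable growth at the cusps, which is where the non-triviality of $\chi$ is actually used (it kills the constant term of $\tc$, hence the $v^{-\frac{1}{2}}$ term that would otherwise reduce $\Fc$ to a harmonic \emph{weak} Maa{\ss} form, as happens in Theorem \ref{thm:varmain}).
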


Including the possibility of $\chi = \mathbbm{1}$, where a constant term arises, requires adjustments either in the holomorphic part $\Fcp$ or in the non-holomorphic part $\Fcm$. On one hand, we may subtract the arising constant term from $\Fcm$ again. The shadow of $\Fc$ would be given by the partial theta function
\begin{align*}
\frac{1}{2\pi}\sum_{n \geq 1} q^{n^2} = \frac{1}{2\pi}\ti(\tau) - \frac{1}{4\pi},
\end{align*}
and hence would not be modular anymore. In \cite{briro16} it is proved that all one-dimensional partial theta functions are (strong) quantum modular forms, which were first introduced by Zagier \cite{zagierquant}. Furthermore, such partial theta functions are related to Appell-Lerch sums of level $2$, to meromorphic Jacobi forms, and closely to false theta functions as well. An exposition on the former two connections was given by Bringmann, Zwegers, and the third author in \cite{brirozwe16}. To state their results, let $\zeta \coloneqq \mathrm{e}^{2 \pi i z}$, and
\begin{align*}
\vartheta(z;\tau) &\coloneqq \sum_{n \in \Z} q^{\frac{n^2}{2}}\zeta^n = -i\zeta^{-\frac{1}{2}} q^{\frac{1}{8}}\prod_{j=0}^{\infty}\left(1-q^{j+1}\right)\left(1-\zeta q^j\right)\left(1-\zeta^{-1}q^{j+1}\right)
\end{align*}
be the standard Jacobi theta function of index and weight $\frac{1}{2}$. The second equality is the Jacobi triple product identity, from which we deduce that $\vartheta(z;\tau)$ has zeros precisely in $\Z\tau+\Z$ as a function of $z$ and all zeros are simple. Moreover, letting $\ell \in \N$, the Appell-Lerch sum of level $\ell$ is defined by
\begin{align*}
A_{\ell}(w,z;\tau) \coloneqq \mathrm{e}^{\pi i \ell w}\sum_{n \in \Z} \frac{(-1)^{\ell n}q^{\ell\frac{n(n+1)}{2}}\mathrm{e}^{2\pi i n z}}{1-\mathrm{e}^{2\pi i w}q^n}.
\end{align*}
The results of \cite{brirozwe16} can be specialized to our setting and read as follows.
\begin{prop}[\protect{\cite[Corollaries 1.2 and 1.5]{brirozwe16}}] \label{Prop:parthetalerch}
\begin{enumerate} [label=(\roman*),  wide, labelwidth=!, labelindent=0pt]
\item We have
\begin{align*}
\sum_{n \geq 1} q^{n^2} = -\int_{-\frac{1}{2}}^{\frac{1}{2}} \mathrm{e}^{2\pi i (2t)} A_2\left(t-\frac{\tau}{2},0;\tau\right) dt.
\end{align*}
\item Let $f_n(\tau)$ be defined by the expansion \cite[eq.\ (2.5)]{brirozwe16}
\begin{align*}
\frac{1}{\vartheta(z;\tau)^2} \eqqcolon \sum_{n \geq -2} f_n(\tau) (2\pi i z)^n.
\end{align*}
Then it holds that
\begin{align*}
f_{-1}(\tau)\sum_{n \geq 1} q^{n^2} + 2f_{-2}(\tau)\sum_{n \geq 1} nq^{n^2} = \int_{-\frac{1}{2}}^{\frac{1}{2}} \frac{\mathrm{e}^{2\pi i (2t)}}{\vartheta\left(t-\frac{\tau}{2};\tau\right)^2}dt,
\end{align*}
and the functions $f_{-1}(\tau)$, $f_{-2}(\tau)$ both are known to be quasimodular forms (compare \cite[Theorem 3.2]{eiza}, \cite[Corollary 2.36]{thebook}, \cite[p.\ 8]{brirozwe16}).
\end{enumerate}
\end{prop}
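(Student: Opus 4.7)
The plan for both parts is the same: extract a specific Fourier coefficient in the $t$-variable, exploiting the orthogonality $\int_{-1/2}^{1/2} e^{2\pi i k t} dt = \delta_{k,0}$. Since the weight $e^{2\pi i (2t)} = e^{4\pi i t}$ appears in both integrals, the task reduces to computing the coefficient of $e^{-4\pi i t}$ in the Fourier expansion in $t$ of $A_2\left(t - \frac{\tau}{2}, 0; \tau\right)$ for part (i), and of $1/\vartheta\left(t - \frac{\tau}{2}; \tau\right)^2$ for part (ii); one then matches the resulting expression against the claimed theta series.

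For part (i) I would start from the defining series of $A_2$ and substitute $w = t - \tau/2$. The $n$-th summand contains a factor $1/\bigl(1 - e^{2\pi i t} q^{n - 1/2}\bigr)$, which expands as a geometric series in $e^{2\pi i t}$; the direction of the expansion depends on whether $|q^{n - 1/2}| < 1$ or $> 1$, i.e., on the sign of $n$. Collecting powers of $e^{2\pi i t}$ and isolating the $e^{-4\pi i t}$ coefficient will produce a single-variable sum whose exponents, after a shift of the summation index, simplify to $k^2$ with $k \geq 1$. Combining with the overall prefactor $e^{\pi i \cdot 2 w} = e^{2\pi i t} q^{-1/2}$ and the sign from the statement then recovers the identity.

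For part (ii) the key input is a Fourier expansion of $1/\vartheta(z;\tau)^2$ in $z$, valid away from its double poles on the lattice $\Z\tau + \Z$. I would obtain such an expansion by differentiating in $z$ the known Appell--Lerch-type expansion of $1/\vartheta(z;\tau)$ (essentially Zwegers' $\mu$-function, up to normalization), or equivalently via a partial fraction decomposition that emerges from the Jacobi triple product. The Laurent coefficients $D_1(\tau)$ and $D_2(\tau)$ at $z=0$ control the principal part at every lattice point through the quasiperiodicity law $\vartheta(z + n\tau + m; \tau)^2 = q^{-n^2} e^{-4\pi i n z} \vartheta(z;\tau)^2$. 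Extracting the coefficient of $e^{-4\pi i t}$ in the expansion at $z = t - \tau/2$ will then split into a $D_1(\tau)$-multiple of $\sum_{k \geq 1} q^{k^2}$ plus a $2 D_2(\tau)$-multiple of $\sum_{k \geq 1} k q^{k^2}$, the extra factor $k$ being the arithmetic trace of the differentiated pole.

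The main obstacle will be part (ii): organizing the Laurent/Fourier data cleanly enough that $D_1$ and $D_2$ surface in precisely the prescribed combination, and tracking the derivative contribution so as to pick up the factor $k$ in the second theta series. Part (i) is essentially bookkeeping once the geometric series is set up correctly. The quasimodularity of $D_1$ and $D_2$ is a separate classical fact, following from the modular behaviour of $\eta$ and of the Eisenstein series that appear in the Taylor expansion of $1/\vartheta(z;\tau)^2$ about $z = 0$, and does not itself enter the derivation of the two integral identities.
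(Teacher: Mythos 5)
For part (i) your plan coincides with the paper's proof: both split the defining sum of $A_2$ according to the sign of $n$, expand each term geometrically in $\mathrm{e}^{2\pi i t}$ (taking the expansion in the correct direction, since $\lvert q^{n-\frac{1}{2}}\rvert>1$ for $n\le 0$), and pick out the $\mathrm{e}^{-4\pi i t}$ coefficient; only the $n=0$ term and the $j=2$ terms of the negative-$n$ branch survive, giving $-q-\sum_{n\ge 1}q^{(n+1)^2}=-\sum_{n\ge1}q^{n^2}$. This part is fine.

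For part (ii) there is a genuine gap. Your proposed source for the Fourier/Laurent data, namely ``differentiating in $z$ the known Appell--Lerch-type expansion of $\frac{1}{\vartheta(z;\tau)}$,'' produces $-\frac{\vartheta'(z;\tau)}{\vartheta(z;\tau)^2}$, not $\frac{1}{\vartheta(z;\tau)^2}$, so that step fails outright. The alternative you mention --- a partial-fraction decomposition of $\frac{1}{\vartheta(z;\tau)^2}$ whose polar data at each lattice point is governed by $D_1$ and $D_2$, organized so that the $\zeta^{-2}$ Fourier coefficient on the line $\im(z)=-\frac{v}{2}$ splits into $D_1\sum_{n\ge1}q^{n^2}+2D_2\sum_{n\ge1}nq^{n^2}$ --- is essentially the statement being proved (it is the index $-1$ structure theorem of Bringmann--Rolen--Zwegers specialized to $\vartheta^{-2}$), and you flag its organization as your ``main obstacle'' without supplying the mechanism. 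The paper closes exactly this gap with a different and self-contained device: it integrates the auxiliary function $\frac{1}{\vartheta(z;\tau)^2}\sum_{n\ge1}q^{n^2}\zeta^{2n}$ over the boundary of a fundamental parallelogram containing only the pole at $z=0$. The residue theorem at the double pole produces the left-hand side $D_1\sum_{n\ge1}q^{n^2}+2D_2\sum_{n\ge1}nq^{n^2}$ in one stroke, while one-periodicity kills the vertical edges and the elliptic transformation law yields the telescoping identity $\sum_{n\ge1}q^{n^2}\zeta^{2n}-q\zeta^{2}\sum_{n\ge1}q^{n^2}\left(\zeta q\right)^{2n}=q\zeta^{2}$ on the horizontal edges, which converts the boundary integral into the claimed integral of $\frac{\mathrm{e}^{2\pi i(2t)}}{\vartheta\left(t-\frac{\tau}{2};\tau\right)^{2}}$. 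If you want to salvage your direct coefficient-extraction route, you would instead have to shift the contour across the poles at $z\in\Z\tau$ one at a time and iterate the quasi-periodicity $\vartheta(z+\tau;\tau)^{2}=q^{-1}\zeta^{-2}\vartheta(z;\tau)^{2}$, summing the resulting residues; that works but must actually be carried out, and it is precisely the content you have deferred.
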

Zwegers provided the non-holomorphic completion of $A_{\ell}$ to a two-variable Jacobi form of weight $1$ and matrix index $\left(\begin{smallmatrix} -\ell & 1 \\ 1 & 0 \end{smallmatrix}\right) $, see \cite[Theorem 4]{zwegers19}. Moreover, a third perspective arises from the close relation between partial and false theta functions. Bringmann and Nazaroglu described the completion of false theta functions to functions with certain Jacobi transformation properties in a recent paper \cite{brinaz}. In particular, their result includes the completion for
\begin{align*}
\sum_{n \in \Z} \textnormal{sgn}(n)q^{\frac{n^2}{2}}\mathrm{e}^{2 \pi i n z},
\end{align*}
where $\textnormal{sgn}(0) \coloneqq 0$.

On the other hand, we may compensate for the additional constant term by adjusting the holomorphic part $\Fcp$. Being more precise, we define $\Gc(\tau) \coloneqq \Gcp(\tau) + \Gcm(\tau)$, where
\begin{align*}
\Gcp(\tau) &\coloneqq \frac{1}{\tp(\tau)}\left(\frac{1}{2}\sum_{n \geq 1} \psi(n)n^2q^{n^2} + \sum_{n \geq 1}\sdft(n)q^n\right), \\
\Gcm(\tau) &\coloneqq \frac{i}{\pi \sqrt{2}} \int_{-\overline{\tau}}^{i\infty}\frac{\ti\left(w\right)}{\left(-i\left(w+\tau\right)\right)^{\frac{3}{2}}} dw.
\end{align*}
It turns out that the strategy of the proof of Theorem \ref{thm:main} still applies, enabling us to complete the picture (regarding even $\chi$) by the following result.
\begin{thm} \label{thm:varmain}
The function $\Gc$ is a polar harmonic Maa{\ss} form of weight $\frac{3}{2}$ on $\Gamma_0\left(4\Mp^2\right)$ with Nebentypus $\left(\psi \cdot \chi_{-4}\right)^{-1}$. Its shadow is given by $\frac{1}{2\pi} \ti$.
\end{thm}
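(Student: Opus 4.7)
The plan is to parallel the proof of Theorem \ref{thm:main}, with the adjustment that the extra partial-theta-like summand $\frac{1}{2}\sum_{n\geq 1}\psi(n)n^2 q^{n^2}$ appearing in $\Gcp$ is designed precisely to absorb the constant-term obstruction arising once $\chi$ is allowed to be trivial. The goal is thus essentially to rerun the argument of Theorem \ref{thm:main} with $\tc$ replaced by $\ti$ and track how the modification of the holomorphic part restores compatibility with the (weak) Maass form growth condition.

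First I would multiply through by $\tp$ and study the auxiliary weight-$3$ object $H(\tau) \coloneqq \tp(\tau)\Gc(\tau)$. Its holomorphic part is
\begin{align*}
H^+(\tau) = \frac{1}{2}\sum_{n\geq 1}\psi(n)n^2 q^{n^2} + \sum_{n\geq 1}\sdft(n)q^n,
\end{align*}
and its non-holomorphic part is $\tp(\tau)\Gcm(\tau)$, involving a non-holomorphic Eichler integral of $\ti$. As in the proof of Theorem \ref{thm:main}, combining Shimura's transformation law for $\tp$ with the completion of the Eichler integral should show that $H$ is a (mixed) polar harmonic Maass form of weight $3$ on $\Gamma_0(4\Mp^2)$ of an appropriate character, and hence that $\Gc = H/\tp$ transforms as claimed, with shadow $\xi_{3/2}\Gc = \xi_{3/2}\Gcm = \frac{1}{2\pi}\ti$ following from a direct differentiation of the integral defining $\Gcm$.

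Next, to pin down the Fourier expansion of $H^+$, I would apply the holomorphic projection formula developed earlier in the paper, which operates on translation-invariant objects of moderate growth and expresses the projection in terms of Jacobi polynomials. Feeding in $\tp$ paired with the Eichler-type integral of $\ti$, unwinding the resulting double theta sum, and collecting like Fourier exponents should yield $\sdft(n)$ for generic $n$, together with a diagonal contribution supported on square exponents that reproduces the partial-theta correction $\frac{1}{2}\psi(n)n^2$.

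The main obstacle is the growth condition at the cusps. In Theorem \ref{thm:main} the non-triviality of $\chi$ killed the constant term of the projection by orthogonality of characters; here, with $\chi = \mathbbm{1}$, the diagonal divisor pairs $d = n/d$ no longer cancel and produce exactly the partial-theta correction above. Verifying that the adjusted holomorphic part makes the weak growth condition of a polar harmonic Maass form hold at every cusp of $\Gamma_0(4\Mp^2)$, not only at $i\infty$, is the heart of the adjustment; once this is done the remainder proceeds in direct parallel with the argument for Theorem \ref{thm:main}.
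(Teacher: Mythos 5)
Your overall skeleton (pass to $\tp\Gc$, compute a weight-$3$ holomorphic projection, identify the shadow by differentiating $\Gcm$) matches the paper's, but two of your key mechanisms are misidentified, and this leaves a genuine gap. First, modularity of $\Gc\tp$ does not follow from ``Shimura's transformation law for $\tp$ combined with the completion of the Eichler integral'': the non-holomorphic Eichler integral $\Gcm$ is not itself modular (only its image under $\xi_{3/2}$ is), and the assertion that $\Gcp+\Gcm$ transforms correctly is precisely what must be proved. The actual engine, both here and in Theorem \ref{thm:main}, is Proposition \ref{Prop:Mod}: a translation-invariant function whose weight-$3$ holomorphic projection vanishes and whose $\xi_3$-image is modular of weight $-1$ is itself modular of weight $3$. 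Your holomorphic projection computation is therefore not a device ``to pin down the Fourier expansion of $H^+$'' (that expansion is given by definition); it is the step that proves the transformation law, and you must show $\pi_3(\Gc\tp)=0$ exactly and then invoke Proposition \ref{Prop:Mod}.

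Second, you misattribute the origin of the correction term $\frac{1}{2}\sum_{n\ge 1}\psi(n)n^2q^{n^2}$. It does not come from ``diagonal divisor pairs $d=n/d$'' in $\sdft$; it comes from the constant term of $\ti$, which by Lemma \ref{lem:fmrew} contributes the summand $-\frac{1}{2\pi v^{1/2}}$ to $\Gcm$. This term is \emph{not} covered by Proposition \ref{Prop:pimixed}, whose non-holomorphic input consists only of incomplete-Gamma terms with $m\ge 1$, so its weight-$3$ projection against $\tp$ requires a separate computation: a direct Lipschitz-summation argument gives
\begin{align*}
\pi_{3}\left(-\frac{\tp(\tau)}{2\pi v^{\frac{1}{2}}}\right) = -\frac{1}{2}\sum_{n \geq 1} \psi(n)n^2q^{n^2},
\end{align*}
and only after adding this does $\pi_3(\Gc\tp)$ visibly vanish; your diagonal-divisor heuristic would produce the wrong constant (no factor $\frac{1}{2}$). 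The same $v^{-1/2}$ term is also the precise reason the theorem asserts only a \emph{weak} Maa{\ss} form: it is the unique obstruction to governable growth at the cusps other than $i\infty$, where $\Gcm$ has only linear exponential growth, while all remaining terms decay. As written, your proposal neither supplies the extra projection computation nor locates this failure of governable growth.
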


Rouse and Webb showed in \cite{rouwe} that the only modular forms on $\Gamma_0(N)$ with integer Fourier coefficients and no zeros on $\H$ are eta quotients. In addition, Mersmann and Lemke-Oliver completed the classification of theta functions which may be written as such an eta quotient. We cite their results in the formulation of \cite[Theorem 1.2]{mertens2019mock}.
\begin{thm}[\protect{\cite{etatheta}, \cite{mersthesis}}]\label{thm:MLO}
The only nontrivial primitive characters $\psi$ for which $\tp$ is an eta quotient are contained in the set of Kronecker characters
\begin{align*}
\Psi \coloneqq \left\{ \left(\frac{D}{\cdot}\right) \colon D \in \{-8,-4,-3,2,12,24 \} \right\}.
\end{align*}
\end{thm}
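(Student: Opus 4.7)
The plan is to classify eta-quotient representations of $\tp$ by combining the Ligozat formula for orders of vanishing of an eta quotient at cusps with the precise behaviour of theta series under the action of $\text{SL}_2(\Z)$. First, I would recall the standard parametrization: any eta quotient $f = \prod_{\delta \mid N} \eta(\delta \tau)^{r_{\delta}}$ on $\Gamma_0(N)$ has weight $\frac{1}{2}\sum_{\delta} r_{\delta}$, its Nebentypus is the Kronecker character $\bigl(\frac{(-1)^{k}\prod_{\delta} \delta^{r_{\delta}}}{\cdot}\bigr)$, and its order of vanishing at a cusp $\frac{a}{c}$ equals $\frac{N}{24 \gcd(c^2,N)}\sum_{\delta} \frac{\gcd(c,\delta)^2}{\delta}\, r_{\delta}$. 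In particular the Nebentypus of an eta quotient is always a real Kronecker character, which already forces $\psi$ to be of the form $\bigl(\frac{D}{\cdot}\bigr)$ for some fundamental discriminant $D$.

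Second, I would match the weight: since $\tp$ has weight $\tfrac{1}{2} + \lambda_{\psi} \in \{\tfrac{1}{2},\tfrac{3}{2}\}$, the total exponent $\sum_{\delta} r_{\delta}$ is either $1$ or $3$. Next, I would compare the orders of vanishing of $\tp$ at every cusp of $\Gamma_0(4\Mp^2)$, which can be read off from the Poisson summation transformation of the theta series, against the Ligozat expressions above. This yields, for each candidate modulus $\Mp$, an explicit linear system in the exponents $r_{\delta}$ with $\delta \mid 4\Mp^2$.

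Third, and most crucially, I would derive an effective upper bound on $\Mp$ so that only finitely many characters remain to inspect. The idea is that the cusp-vanishing conditions become overdetermined as $\Mp$ grows: a theta function concentrates its zeros on relatively few cusps, whereas the Ligozat order formula constrains the $r_{\delta}$ at \emph{every} cusp of $\Gamma_0(4\Mp^2)$. Quantifying this tension — for instance, by bounding the prime factors that can divide $\Mp$ by tracking non-negativity of $r_{\delta}$ at cusps with small denominator against the forced vanishing dictated by the theta transformation — reduces the classification to a finite list of discriminants $D$.

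Finally, within that finite window, I would perform a case-by-case check, confirming that only the six discriminants in $\Psi$ produce a valid eta quotient and exhibiting the corresponding identities (the classical Jacobi-type product formulae for $\theta_{\chi_{-4}}$, $\theta_{\chi_{-3}}$, and their siblings at the remaining discriminants). The main obstacle is step three: obtaining the effective bound on $\Mp$ without appealing to an ad hoc computer search, which is exactly the technical heart of Mersmann's thesis \cite{mersthesis} and of Lemke Oliver's completion \cite{etatheta}. Once the bound is in hand, the remaining verification is a routine, if tedious, linear-algebra exercise.
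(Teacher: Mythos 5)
A point of order first: the paper does not prove Theorem \ref{thm:MLO} at all. It imports the statement verbatim from Lemke Oliver \cite{etatheta} and Mersmann \cite{mersthesis} (in the formulation of \cite{mertens2019mock}), so there is no internal proof to compare your attempt against. Judged on its own terms, your outline does follow the broad strategy of those references: the Nebentypus of an eta quotient is a real Kronecker character, which forces $\psi$ (or $\psi\cdot\chi_{-4}$ in the odd case) to be real and hence $\psi = \left(\frac{D}{\cdot}\right)$ for a fundamental discriminant $D$; the weight forces $\sum_{\delta} r_{\delta} \in \{1,3\}$; and one plays Ligozat's order formula at the cusps against the known vanishing of $\tp$.

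As a proof, however, there is a genuine gap exactly where you flag it. The entire content of the theorem is the effective bound on the modulus (equivalently, the elimination of all but finitely many discriminants) together with the verification that precisely the six listed discriminants survive; your step three asserts that the cusp conditions ``become overdetermined'' without quantifying this, and you then defer the argument to \cite{mersthesis} and \cite{etatheta} --- the very sources whose result is being proved, which is circular as a standalone argument. Two further cautions: an eta quotient equal to $\tp$ need not live on $\Gamma_0\left(4\Mp^2\right)$, since a priori the $\delta$ may range over the divisors of any level $N$, so your linear system must be set up for arbitrary $N$ and the finiteness argument must also bound $N$ and the number of eta factors --- this is where the real difficulty of Mersmann's thesis lies; and the concluding case check requires exhibiting the actual product identities for the six surviving characters, which you mention but do not supply. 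For the purposes of this paper, citing the classification, as the authors do, is the appropriate move.
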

Combining Theorems \ref{thm:main}, \ref{thm:varmain}, and \ref{thm:MLO} immediately yields the following corollary. (Recall that there is no odd character of modulus $2$.)
\begin{cor} \label{cor:mocktheta}
If $\psi \in \Psi \setminus \left\{\left(\frac{2}{\cdot}\right)\right\}$ is odd then $\Fcp$ and $\Gcp$ are mock theta functions.
\end{cor}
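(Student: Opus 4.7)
The plan is to recognize this corollary as a direct consequence of the three cited theorems, requiring only the observation that the potentially polar part becomes regular under the additional hypothesis $\psi \in \Psi$. Theorems \ref{thm:main} and \ref{thm:varmain} already identify $\Fc$ and $\Gc$ as polar harmonic Maa{\ss} forms of weight $\frac{3}{2}$ with shadows $\frac{1}{2\pi}\theta_{\overline{\chi}}$ and $\frac{1}{2\pi}\ti$ respectively, both of which are theta functions. Consequently, to conclude that $\Fcp$ and $\Gcp$ are mock theta functions (in the standard generalized sense of mock modular forms whose shadow is a theta function), it suffices to drop the adjective \emph{polar}, i.e., to verify that $\Fc$ and $\Gc$ are holomorphic on $\H$.

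I would then inspect the definitions of $\Fcp$ and $\Gcp$: every ingredient except the factor $\tp(\tau)^{-1}$ is manifestly holomorphic on $\H$, so the only candidate locations for poles are the zeros of $\tp$. Here Theorem \ref{thm:MLO} provides the crucial input. The hypothesis $\psi \in \Psi$ ensures that $\tp$ is an eta quotient, and since the Dedekind eta function $\eta(\tau) = q^{\frac{1}{24}}\prod_{n \geq 1}(1-q^n)$ is nowhere vanishing on the upper half plane, neither is any product of powers of $\eta$. Hence $\tp$ has no zeros on $\H$, which is exactly the statement needed to eliminate the poles of $\Fcp$ and $\Gcp$.

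Putting everything together, under the hypotheses of the corollary, $\Fc$ and $\Gc$ are genuine (non-polar) harmonic Maa{\ss} forms with theta function shadows, which makes their holomorphic parts $\Fcp$ and $\Gcp$ mock theta functions in the sense above. The exclusion of $\left(\frac{2}{\cdot}\right)$ in the statement is automatic, since that Kronecker character is even whereas the hypothesis requires $\psi$ odd; the parenthetical remark in the statement is merely a bookkeeping reminder. There is no genuine obstacle in this argument: essentially all of the technical content has been front-loaded into Theorems \ref{thm:main}, \ref{thm:varmain}, and \ref{thm:MLO}, and the proof reduces to assembling these inputs.
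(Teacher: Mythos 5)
Your argument is correct and is exactly the reasoning the paper intends: the paper gives no separate proof, stating only that the corollary follows immediately from combining Theorems \ref{thm:main}, \ref{thm:varmain}, and \ref{thm:MLO}, and your observation that $\tp$ is a nowhere-vanishing eta quotient for $\psi\in\Psi$ (so the polar harmonic Maa{\ss} forms have no poles on $\H$) is precisely the intended glue. No issues.
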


The result of Theorem \ref{thm:MLO} motivates us to investigate the odd choices $\psi \in \Psi \setminus \left\{\left(\frac{2}{\cdot}\right)\right\}$ in greater detail. To this end, let $H(n)$ be the Hurwitz class number, counting the weighted number of classes of positive definite binary quadratic forms of discriminant $-n$. Phrased in todays terminology, Zagier discovered in \cite{zagiereis} that 
\begin{align*}
\Hcp(\tau) \coloneqq -\frac{1}{12} + \sum_{n \geq 1} H(n)q^n.
\end{align*}
can be completed to a harmonic Maa{\ss} form $\mathcal{H}(\tau) \coloneqq \Hcp(\tau) -\frac{1}{4}\Gcm(\tau)$ of weight $\frac{3}{2}$ on $\Gamma_0(4)$. To see this, one may rewrite $\Gcm$ as in Lemma \ref{lem:fmrew} below, and compare \cite[Theorem 6.3]{thebook} for instance. The function $\mathcal{H}$ is then often called \textit{Zagier's weight $\frac{3}{2}$ non-holomorphic Eisenstein series}. A standard computation using the Sturm bound yields the following.

\begin{cor} \label{cor:hurwitz}
Let $\psi = \chi_{-4}$. Then we have
\begin{align*}
\sum_{n \geq 1}\sdft(8n)q^{8n} = -4\tp(\tau)\sum_{n \geq 1} H(8n-1)q^{8n-1}.
\end{align*}
Or in other words, by definition of $\psi=\chi_{-4}$ and $\tp$,
\begin{align*}
\sdft(8n) = 4 \sum_{\substack{j \geq 1 \\ (2j-1)^2 < 8n}} (-1)^{j}(2j-1)H\left(8n-(2j-1)^2\right).
\end{align*}
\end{cor}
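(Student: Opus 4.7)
The plan is to exhibit an explicit weight-$3$ holomorphic modular form on $\Gamma_0(64)$ whose vanishing encodes the identity, and then invoke Sturm's bound. With $\psi = \chi_{-4}$, Theorem~\ref{thm:varmain} gives that $\Gc$ is a polar harmonic weak Maa{\ss} form of weight $\frac{3}{2}$ on $\Gamma_0(64)$ with trivial nebentypus and shadow $\frac{1}{2\pi}\ti$. Zagier's form $\mathcal{H} = \Hcp - \frac{1}{4}\Gcm$ lies on $\Gamma_0(4) \supset \Gamma_0(64)$ with trivial nebentypus, and hence has shadow $-\frac{1}{8\pi}\ti$. The linear combination $\Gc + 4\mathcal{H}$ therefore has vanishing shadow, and since the non-holomorphic parts cancel, it coincides with the weakly holomorphic form $\Gcp + 4\Hcp$ of weight $\frac{3}{2}$ on $\Gamma_0(64)$. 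By Theorem~\ref{thm:MLO}, $\tp = \eta(8\tau)^3$ is an eta quotient and therefore has no zeros on $\H$, so $\Gcp$ is in fact holomorphic on $\H$.

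Multiplying by $\tp$ then yields
\begin{align*}
P(\tau) := \tp(\tau)\bigl(\Gcp(\tau) + 4\Hcp(\tau)\bigr) = \sum_{n \geq 1}\sdft(n)q^n + \tfrac{1}{2}\sum_{n \geq 1}\chi_{-4}(n)n^2 q^{n^2} - \tfrac{1}{3}\tp(\tau) + 4\tp(\tau)\sum_{n \geq 1}H(n)q^n.
\end{align*}
Since odd squares are $\equiv 1 \pmod*{8}$, the odd-square-supported terms $\tp$ and $\frac{1}{2}\sum\chi_{-4}(n)n^2 q^{n^2}$ are annihilated by the operator $U_8$. A direct expansion of the remaining pieces gives
\begin{align*}
(U_8 P)(\tau) = \sum_{n \geq 1}\left[\sdft(8n) + 4\sum_{\substack{j \geq 1 \\ (2j-1)^2 < 8n}}(-1)^{j-1}(2j-1)H\bigl(8n - (2j-1)^2\bigr)\right]q^n.
\end{align*}
Once $P$ is shown to lie in $M_3(\Gamma_0(64))$, so does $U_8 P$. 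Since $[\slz : \Gamma_0(64)] = 96$, the Sturm bound for $M_3(\Gamma_0(64))$ is $\lfloor 3\cdot 96/12\rfloor = 24$, and hence $U_8 P \equiv 0$ will follow from verifying the vanishing of its first $24$ Fourier coefficients---a finite numerical check equivalent to the claimed identity.

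The main obstacle is establishing that $P$ genuinely extends to a holomorphic modular form at every cusp of $\Gamma_0(64)$, and not merely at $i\infty$. This requires pairing the vanishing orders of the cusp form $\tp = \eta(8\tau)^3$ at each cusp with the (at worst polar) growth of $\Gcp + 4\Hcp$ there, and confirming the former always dominates. Once holomorphicity at the cusps is secured, the Sturm bound closes the argument via the indicated finite computation.
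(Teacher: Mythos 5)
Your construction is exactly the route the paper intends (it dispatches this corollary with the single sentence ``a standard computation using the Sturm bound yields the following''), and your bookkeeping is correct: with $\psi=\chi_{-4}$ the Nebentypus of $\Gc$ is trivial, the level is $4\Mp^{2}=64$ of index $96$, the shadows of $\Gc$ and $\mathcal{H}$ are $\tfrac{1}{2\pi}\ti$ and $-\tfrac{1}{8\pi}\ti$ so that $\Gc+4\mathcal{H}=\Gcp+4\Hcp$, the expansion of $P=\tp\left(\Gcp+4\Hcp\right)$ is as you write it, the operator $U(8)$ annihilates the two pieces supported on odd squares, and the vanishing of $P\vert U(8)$ is precisely the asserted identity (for $n=1$ it reads $\sdft(8)=-4=-4H(7)$, which checks out).

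The one step you explicitly leave open --- holomorphy of $P$ at the cusps of $\Gamma_0(64)$ other than $i\infty$ --- genuinely must be closed before the paper's Sturm lemma (stated only for forms in $M_k$) applies, and the route you sketch, comparing vanishing orders of $\eta(8\tau)^{3}$ against pole orders of $\Gcp+4\Hcp$ cusp by cusp, is more work than needed. A cleaner closure: $P$ is holomorphic on all of $\H$ without any appeal to the nonvanishing of $\tp$, since $\tp\Gcp$ is by definition the (holomorphic) numerator of $\Gcp$ and $\tp\Hcp$ is a product of holomorphic functions; it is weight-$3$ modular on $\Gamma_0(64)$ by your shadow cancellation together with Theorem \ref{thm:varmain}; and its Fourier coefficients at $i\infty$ are polynomially bounded because $\sdft(n)$, $H(n)$, and the coefficients of $\tp$ all are. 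A weakly holomorphic modular form with a genuine pole at some cusp has coefficients at $i\infty$ of size $\mathrm{e}^{c\sqrt{n}}$ for some $c>0$, so polynomial growth forces $P$, and hence $P\vert U(8)\in M_3\left(\Gamma_0(64)\right)$ as well, to be holomorphic at every cusp. With that supplied, your reduction to verifying the first $24$ coefficients of $P\vert U(8)$ completes the proof.
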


In addition, we note that both the coefficients of $\tp\Hcp$ and the values of $\sdft$ grow at most polynomially. Based on the observations from the preceeding discussion we inquire the following.
\begin{que} 
For every $\psi \in \Psi \setminus \left\{\left(\frac{2}{\cdot}\right)\right\}$, do there exist numbers $0 \neq C,t \in \Q$, such that
\begin{align*}
C \cdot \Gcp(t\tau)
\end{align*}
generates a linear combination of Hurwitz class numbers on some arithmetic progression?
\end{que}

In the course of proving Theorem \ref{thm:main} and Theorem \ref{thm:varmain} we compute the holomorphic projection of a product similar to the structure of mixed harmonic Maa{\ss} forms. However, we just rely on translation invariance and impose suitable growth conditions on the coefficients to ensure convergence. The resulting expression can be written in terms of a particular class of Jacobi polynomials $\Pc_{r}^{(a,b)}$ (sometimes also referred to as \textit{hypergeometric polynomials}.). For $r \in \N_0$, these polynomials are defined by
\begin{align*}
\Pc_{r}^{(a,b)}(z) &\coloneqq \frac{\Gamma(a+r+1)}{r! \ \Gamma(a+b+r+1)} \sum_{j=0}^{r} {r \choose j} \frac{\Gamma(a+b+r+j+1)}{\Gamma(a+j+1)}\left(\frac{z-1}{2}\right)^{j} \\
&= \frac{\Gamma(a+r+1)}{r! \ \Gamma(a+1)} {}_2F_1\left(-r,a+b+r+1,a+1,\frac{1-z}{2}\right),
\end{align*}
which can be found in \cite[item 8.962]{table} for example. Here, ${}_2F_1$ denotes the usual Gau{\ss} hypergeometric function. Then we have the following result.
\begin{prop}\label{Prop:pimixed}
Let $k_{f} \in \R \setminus \N$, $k_g \in \R \setminus \left(-\N\right)$, such that $\kappa \coloneqq k_{f}+k_g \in \Z_{\geq 2}$. Let $\alpha(m)$, $\beta(n)$ be two complex sequences, and define\footnote{The function $\Gamma(s,z)$ denotes the incomplete Gamma function, which will be introduced in Section \ref{sec:prel}.}
\begin{align*}
f(\tau) \coloneqq \sum_{m \geq 1} \alpha(m)m^{k_{f}-1}\Gamma(1-k_{f},4\pi mv)q^{-m}, \qquad g(\tau) \coloneqq \sum_{n \geq 1} \beta(n)q^{n}.
\end{align*}
Suppose that 
\begin{enumerate}
\item the function $(fg)(r+iv)$ grows at most polynomially as $v \searrow 0$, where $r \in \Q$, and that
\item the function $(fg)(iv)$ grows at most polynomially as $v \nearrow \infty$.
\end{enumerate}
Then the weight $\kappa$ holomorphic projection of $fg$ is given by
\begin{align*}
\pi_{\kappa}\left({f}g\right)(\tau) = -\Gamma(1-k_{f}) \sum_{m \geq 1} \sum_{n-m \geq 1} \alpha(m) \beta(n)\left(n^{k_{f}-1}\Pc_{\kappa-2}^{(1-k_{f},1-\kappa)}\left(1-2\frac{m}{n}\right)-m^{k_{f}-1}\right)q^{n-m}.
\end{align*}
\end{prop}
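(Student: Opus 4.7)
The strategy is to compute the coefficient of $q^N$ in $\pi_\kappa(fg)$ directly from the integral representation of the holomorphic projection, evaluate the resulting incomplete Gamma integral in closed form, and then identify the outcome as a Jacobi polynomial via a classical connection formula and Euler transformation for ${}_2F_1$.

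Expanding the product, the non-holomorphic coefficient of $q^N$ (with $N \geq 1$) in $fg$ is
$$a_{fg}(N, v) = \sum_{m \geq 1} \alpha(m)\beta(N + m) m^{k_f - 1} \Gamma(1 - k_f, 4\pi m v),$$
which converges absolutely on $\H$ by the polynomial growth of $\alpha,\beta$ and the exponential decay of $\Gamma(1-k_f, 4\pi m v)$ as $v \to \infty$. The standard formula for the weight-$\kappa$ holomorphic projection of a translation invariant function then yields
$$c(N) := \left[\pi_\kappa(fg)\right]_N = \frac{(4\pi N)^{\kappa - 1}}{\Gamma(\kappa - 1)} \int_0^\infty a_{fg}(N, v) e^{-4\pi N v} v^{\kappa - 2} dv.$$
After justifying the interchange of sum and integral, the task reduces to evaluating
$$I(m, n) := \int_0^\infty v^{\kappa - 2} e^{-4\pi(n - m) v} \Gamma(1 - k_f, 4\pi m v)\, dv, \qquad n = N + m,$$
which I plan to attack via the classical identity (Gradshteyn--Ryzhik~6.455.1)
$$\int_0^\infty x^{\mu - 1} e^{-\beta x} \Gamma(\nu, \alpha x)\, dx = \frac{\alpha^\nu \Gamma(\mu + \nu)}{\mu (\alpha + \beta)^{\mu + \nu}} {}_2F_1\!\left(1, \mu + \nu; \mu + 1; \tfrac{\beta}{\alpha + \beta}\right).$$

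With $\mu = \kappa - 1$, $\nu = 1 - k_f$, $\alpha = 4\pi m$, $\beta = 4\pi(n - m)$, so that $\mu + \nu = k_g$, $\alpha + \beta = 4\pi n$, $\beta/(\alpha + \beta) = 1 - m/n$, the common factors of $4\pi$ collapse and $m^{k_f - 1} m^{1 - k_f} = 1$, leaving
$$c(N) = \sum_{m \geq 1} \alpha(m) \beta(n) \frac{\Gamma(k_g)}{\Gamma(\kappa)} n^{k_f - 1} \left(1 - \tfrac{m}{n}\right)^{\kappa - 1} {}_2F_1\!\left(1, k_g; \kappa; 1 - \tfrac{m}{n}\right).$$
To reshape the hypergeometric into the asserted Jacobi polynomial, I apply the connection formula for ${}_2F_1$ around $z = 1$,
$${}_2F_1(1, k_g; \kappa; z) = \tfrac{\kappa - 1}{k_f - 1} {}_2F_1(1, k_g; 2 - k_f; 1 - z) + \tfrac{\Gamma(\kappa) \Gamma(1 - k_f)}{\Gamma(k_g)} (1 - z)^{k_f - 1} z^{1 - \kappa},$$
evaluated at $z = 1 - m/n$. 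The second summand, once multiplied by the remaining prefactors, contracts to exactly $\Gamma(1 - k_f) m^{k_f - 1}$. For the first summand, I combine it with $(1 - m/n)^{\kappa - 1}$ and invoke the Euler transformation
$$(1 - x)^{\kappa - 1}\, {}_2F_1(1, k_g; 2 - k_f; x) = {}_2F_1(-(\kappa - 2), 1 - k_f; 2 - k_f; x),$$
which is a polynomial of degree $\kappa - 2$ in $x = m/n$. Comparison with the ${}_2F_1$-representation of $\Pc_{\kappa-2}^{(1 - k_f, 1 - \kappa)}(1 - 2m/n)$, together with $\Gamma(2 - k_f)/(k_f - 1) = -\Gamma(1 - k_f)$, identifies the first piece as $-\Gamma(1 - k_f) n^{k_f - 1} \Pc_{\kappa - 2}^{(1 - k_f, 1 - \kappa)}(1 - 2m/n)$, and collecting the two contributions and pulling out the common $-\Gamma(1 - k_f)$ produces the stated formula.

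The main technical obstacle I foresee is the convergence of $I(m, n)$ at $v = 0$ when $k_g < 0$, where the integrand behaves like $v^{k_g - 1}$ and the Gradshteyn--Ryzhik identity does not apply directly. My plan is to handle this either by analytic continuation in $k_f$---both sides of the claimed identity are meromorphic in $k_f$ on $\R \setminus \N$, so it suffices to verify the formula on the region $k_g > 0$ and then extend---or, more intrinsically, by employing a regularized holomorphic projection whose customary subtraction of divergent principal parts produces exactly the $-m^{k_f - 1}$ correction appearing in the stated identity and thus dovetails naturally with the shape of the final answer.
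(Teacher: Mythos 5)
Your proposal is correct and follows essentially the same route as the paper's first proof: reduce to the coefficient integral for the weight-$\kappa$ projection, evaluate it via Gradshteyn--Ryzhik 6.455, and convert the resulting ${}_2F_1$ into the Jacobi polynomial using the connection formula at $z=1$ together with an Euler transformation. Your closing concern about convergence at $v=0$ when $k_g\le 0$ is a fair observation (the paper's own Lemma on the integral also requires $\re(a+b)=k_g>0$ and is applied without comment), but in every application here $k_g>0$, so no further work is needed.
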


We provide two proofs of this result, which correspond to either definition of the Jacobi polynomials. The first one relies on identities of the Gau{\ss} hypergeometric function, while the second one relies on two Lemmas from \cite{mertens2016}.

\begin{rmks}
\begin{enumerate}
\item Assuming the framework of mixed harmonic Maa{\ss} forms, some variants of this result appear in \cite[Theorem 3.5]{holopro}, and \cite[Theorem 4.6]{mertens2016}. Moreover, if $k_{g} = 2-k_{{f}}$, then $\Pc_0^{a,b}(z) = 1$. Therefore, working with regularized holomorphic projection, our result includes \cite[Proposition 2.1]{mertens2019mock}. 
\item Note that the summation conditions imply $-1 < 1-2\frac{m}{n} < 1$. The asymptotic behavior of the Jacobi polynomials inside $(-1,1)$ is well known and can be found in \cite[item 8.965]{table} for instance.
\item One may choose various other special values of half integral $k_f$, $k_g$, which simplify the Jacobi polynomial and then the whole factor 
\begin{align*}
n^{k_{f}-1}\Pc_{\kappa-2}^{(1-k_{f},1-\kappa)}\left(1-2\frac{m}{n}\right)-m^{k_{f}-1}.
\end{align*}
This idea leads to other choices of polynomials $P\left(\frac{n}{d},d\right) \in \Q[X,Y]$ than $d^2$ in the definition of $\sdf$ such that 
\begin{align*}
&\pi_{\kappa}\left(\sum_{n \geq 1} \sum_{d \in D_n}\chi\left( \frac{\frac{n}{d}-d}{2}\right)\psi\left(\frac{\frac{n}{d}+d}{2}\right)P\left(\frac{n}{d},d\right)  q^n \right. \\
&\hspace{3em} \left. + \sum_{n \geq 1} \sum_{m \geq 1} \alpha\left(m^2\right)\beta\left(n^2\right)m^{2(k_{f}-1)}\Gamma(1-k_{f},4\pi m^2v)q^{n^2-m^2}\right) = 0.
\end{align*}
We demonstrate during the proofs of Theorem \ref{thm:main} and \ref{thm:varmain} how to rewrite the corresponding generating function
\begin{align*}
\sum_{n \geq 1} \sum_{d \in D_n}\chi\left( \frac{\frac{n}{d}-d}{2}\right)\psi\left(\frac{\frac{n}{d}+d}{2}\right)P\left(\frac{n}{d},d\right)
\end{align*}
to obtain a choice of $P\left(\frac{n}{d},d\right) \in \Q[X,Y]$, which matches the factor involving the Jacobi polynomial.
\end{enumerate}
\end{rmks}

A second application of Proposition \ref{Prop:pimixed} arises from $p$-adic properties of $\Fcp$ and $\Gcp$. Mertens, Ono and the third author proved such a property for their mock modular Eisenstein series $\Ec^+$, c.\ f.\ \cite[Theorem 1.4]{mertens2019mock}. More precisely, the idea is to inspect the iterated action of the $U$-operator
\begin{align*}
\left(\sum_{n \gg -\infty}\alpha(n)q^n\right) \Big\vert \ U(p) \coloneqq \sum_{n \gg -\infty}\alpha(pn)q^n
\end{align*}
on $\tp\left(p^{2a}\tau\right)\Ec^+(\tau)$ for every $a \in \N$ and $p > 3$ prime. (The notation $\sum_{n \gg -\infty}$ is explained in Lemma \ref{lem:mafu}.) Then they show that this is congruent to some meromorphic modular form of weight $2$. In our case Theorems \ref{thm:main} and \ref{thm:varmain} imply that the products $\tp(\tau)\Fc(\tau)$ and $\tp(\tau)\Gc(\tau)$ are modular of weight $3$ with Nebentypus $\overline{\chi}$ or trivial Nebentypus respectively. Therefore, we find a different result.
\begin{thm} \label{thm:padicthm}
Let $a$, $b$, $p \in \N$ and suppose that $p$ is an odd prime. Then we have
\begin{align*}
\left(\tp\left(p^{2a}\tau\right)\Fcp(\tau)\right) \Big\vert \ U\left(p^b\right) \equiv 0 \quad \pmod*{p^{\textnormal{min}(a,b)}},
\end{align*}
and
\begin{align*}
\left(\tp\left(p^{2a}\tau\right)\Gcp(\tau)\right) \Big\vert \ U\left(p^b\right) \equiv 0 \quad \pmod*{p^{\textnormal{min}(a,b)}}.
\end{align*}
\end{thm}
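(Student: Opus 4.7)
The plan is to apply holomorphic projection to the polar weight-$3$ harmonic Maass form $\tp(p^{2a}\tau)\Fc(\tau)$—which has weight $3$ since $\psi$ is odd forces $\tp$ to be of weight $3/2$, so Theorem \ref{thm:main} gives that the product transforms with Nebentypus $\overline{\chi}$ on $\Gamma_0(4\Mc^2)\cap\Gamma_0(4p^{2a}\Mp^2)$—and then extract the congruence by applying $U(p^b)$ to the resulting $q$-series identity. The natural tool is Proposition \ref{Prop:pimixed} applied with $k_f=k_g=3/2$, so that $\kappa=3$.

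The key simplification is that at these parameters the Jacobi polynomial collapses to $\Pc_1^{(-1/2,-2)}(1-2x) = (1+x)/2$, whence the bracket in Proposition \ref{Prop:pimixed} reduces to $(\sqrt{n}-\sqrt{m})^2/(2\sqrt n)$. After rewriting $\Fcm$ as an incomplete-Gamma Fourier series (as in Lemma \ref{lem:fmrew}), substituting $n=p^{2a}k^2$ and $m=j^2$ yields
\[
\pi_3\bigl(\tp(p^{2a}\tau)\Fcm(\tau)\bigr) \;=\; -\frac{1}{p^a}\sum_{\substack{j,k\geq 1\\ p^a k>j}}\chi(j)\psi(k)(p^ak-j)^2\,q^{(p^ak-j)(p^ak+j)}.
\]
Since $\pi_3$ acts trivially on the holomorphic part $\tp(p^{2a}\tau)\Fcp$, rearranging produces the clean identity
\[
\tp(p^{2a}\tau)\Fcp(\tau) \;=\; M(\tau) \;+\; \frac{1}{p^a}\sum_{\substack{j,k\geq 1\\ p^a k>j}}\chi(j)\psi(k)(p^ak-j)^2\, q^{(p^ak-j)(p^ak+j)},
\]
where $M:=\pi_3(\tp(p^{2a}\tau)\Fc(\tau))$ is a weight-$3$ meromorphic modular form on the relevant congruence subgroup. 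The same recipe produces an analogous identity for $\Gcp$, with $\chi$ replaced by $\mathbbm{1}$, $\tc$ by $\ti$, and Theorem \ref{thm:main} by Theorem \ref{thm:varmain}; the extra holomorphic summand $\frac12\sum_{n\geq 1}\psi(n)n^2 q^{n^2}$ of $\Gcp$ is fixed by $\pi_3$ and so plays no role.

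Applying $U(p^b)$ to both sides and using the commutation rule $F(p\tau)G(\tau)\vert U(p)=F(\tau)(G\vert U(p))$ iterated $b$ times reduces, when $b\leq 2a$, the left-hand side to $\tp(p^{2a-b}\tau)(\Fcp\vert U(p^b))$. A short case analysis on $v_p(j)$ versus $a+v_p(k)$ (crucially using that $p$ is odd so that $(p^ak-j)+(p^ak+j)$ has the expected valuation) shows that every surviving term of $U(p^b)$ applied to the explicit cross-term satisfies $v_p((p^ak-j)^2)\geq \min(2a,b)$, so after absorbing the external $1/p^a$ the cross-term contributes divisibility by $p^{\min(a,b-a)}$. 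Together with standard congruence properties of $M\vert U(p^b)$, one then deduces $(\tp(p^{2a}\tau)\Fcp(\tau))\vert U(p^b)\equiv 0 \pmod{p^{\min(a,b)}}$. The case $b>2a$ is handled by factoring $U(p^b)=U(p^{b-2a})\circ U(p^{2a})$ and invoking the explicit identity $\tp(p^{2a}\tau)\vert U(p^{2a})=\tp(\tau)$.

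The main obstacle is aligning the $p$-adic valuations contributed by the explicit cross-term and by $M\vert U(p^b)$ so that the two sources together reach the full $p^{\min(a,b)}$ divisibility. For $b\leq a$ the cross-term alone has negative valuation $b-a$ after division by $p^a$, so $M$ must be shown to have denominator exactly $p^a$ (matching the cross-term), and then the integrality of the left-hand side forces cancellation of these denominators and produces the additional $p^b$ factor that one needs. Carefully unpacking this cancellation across the three regimes $b\leq a$, $a<b\leq 2a$, and $b>2a$ is the most delicate part of the argument, since one must verify that no additional $p$-adic denominators hide inside $\pi_3(\tp(p^{2a}\tau)\Fc)$ beyond the anticipated $p^a$.
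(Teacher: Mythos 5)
Your overall frame (project $\tp\left(p^{2a}\tau\right)\Fc(\tau)$ holomorphically in weight $3$, then apply $U\left(p^b\right)$ to the resulting $q$-series identity) is the same as the paper's, and your evaluation of the Jacobi polynomial matches. But there is a genuine gap at the decisive step: you leave $M=\pi_3\left(\tp\left(p^{2a}\tau\right)\Fc(\tau)\right)$ as an undetermined weight-$3$ meromorphic modular form and defer to ``standard congruence properties of $M\vert U\left(p^b\right)$'', which do not exist for an arbitrary such form. The paper's key observation is that $M$ vanishes identically: the product has odd weight $3$ and \emph{even} Nebentypus $\overline{\chi}$ (resp.\ trivial Nebentypus for the $\Gc$ case), and $-I\in\Gamma_0(N)$ forces $M=-M=0$. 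This turns the whole problem into an exact identity expressing $\tp\left(p^{2a}\tau\right)\Fcp(\tau)$ as an explicit twisted divisor sum, with no auxiliary modular form left to control. You had all the ingredients for this (you even record the Nebentypus) but never draw the conclusion.

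Your handling of the divisibility is also incomplete, as you yourself concede: after dividing by $p^a$ the cross term only visibly carries $p^{\min(a,\,b-a)}$, which fails for $b\le a$, and the ``cancellation across the three regimes'' is never established. The paper instead rewrites the coefficient of $q^r$ as a sum over divisors $d$ of $r$ subject to the extra constraint $d+\frac{r}{d}\equiv 0\pmod*{2p^a}$ (coming from $d=p^a n-m$, $\frac{r}{d}=p^a n+m$, so $d+\frac{r}{d}=2p^a n$); after applying $U\left(p^b\right)$ this constraint together with $d\mid p^b r$ forces $v_p\left(d^2\right)\ge\min\left(2a,\,b+v_p(r)\right)\ge\min(a,b)$, so \emph{every term} of the sum is divisible by $p^{\min(a,b)}$ and no cancellation is needed. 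Note also that you and the paper disagree about a factor $p^{-a}$ in the cross term (yours arises from $\sqrt{N}=p^a n$ in the Jacobi-polynomial factor, while the paper's displayed identity carries the bare weight $d^2$); under the paper's normalization the termwise divisibility immediately closes the proof, whereas under yours it does not for $b<2a$, so this normalization must be settled before your argument is complete. The auxiliary machinery you introduce (commuting $U(p)$ past $\tp\left(p^{2a}\tau\right)$, the case split at $b=2a$, the identity $\tp\left(p^{2a}\tau\right)\vert U\left(p^{2a}\right)=\tp(\tau)$) is unnecessary once the vanishing of the projection and the divisor-set analysis are in place.
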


The third remark on page 3 in \cite{mertens2019mock} states that ``the generating function of $\sol$ can be given in terms of Appell-Lerch sums as studied by Zwegers'' in \cite{zwegersthesis}, \cite{zwegers19} (see also \cite[Lemma 2]{mertens2014}). This remark applies verbatim to the generating function of $\sdf$ as well, and we present a strategy which applies to both generating functions. Let $D_z \coloneqq \frac{1}{2\pi i} \frac{d}{dz}$, giving
\begin{align*}
(D_z^jA_{\ell})(w,z,\tau) = \mathrm{e}^{\pi i \ell w}\sum_{n \in \Z} n^j\frac{(-1)^{\ell n}q^{\ell\frac{n(n+1)}{2}}\mathrm{e}^{2\pi i n z}}{1-\mathrm{e}^{2\pi i w}q^n}.
\end{align*}
for every integer $j \geq 0$. Then we have the following result.
\begin{prop} \label{Prop:alprop}
Suppose that $\chi$ is non-trivial and even. Additionally assume $\Mp \mid \Mc$. Then we have that
\begin{align*}
& \sum_{n \geq 1} \sdf(n) q^n \\
&= \frac{1}{2} \sum_{b=1}^{\Mc-1} \chi(b)\sum_{c=0}^{\Mc-1}\psi(b+c) q^{c(c+2b-\Mc)} \left(\Mc D_z + c\right)^2A_1\left(2\Mc c\tau,z,2\Mc^2\tau\right)\Big\vert_{z = \left(2(b+c)-\Mc\right)\Mc\tau +\frac{1}{2}}.
\end{align*}
\end{prop}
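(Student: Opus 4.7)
The strategy is to expand both sides as $q$-series and match them. First, substituting $w = 2\Mc c\tau$, $z_0 \coloneqq (2(b+c)-\Mc)\Mc\tau + \frac{1}{2}$, and the modular variable $2\Mc^2\tau$ into the definition of $A_1$, the identity $e^{2\pi i z_0} = -q^{\Mc(2(b+c)-\Mc)}$ yields a $(-1)^n$ that cancels the alternating sign in $A_1$, while the $-\Mc$ inside $z_0$ is calibrated to cancel the linear-in-$n$ part of the triangular exponent $q^{\Mc^2 n(n+1)}$. Hence
\begin{align*}
A_1(w, z_0, 2\Mc^2\tau) = q^{\Mc c}\sum_{n\in\Z} \frac{q^{\Mc^2 n^2 + 2\Mc n(b+c)}}{1 - q^{2\Mc(c+\Mc n)}}.
\end{align*}
Applying $(\Mc D_z + c)^2$ inserts $(\Mc n + c)^2$ termwise, and the substitution $m \coloneqq \Mc n + c$ simplifies the exponent to $m^2 + 2mb - c^2 - 2bc$ and the denominator to $1 - q^{2\Mc m}$. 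Multiplying by $q^{c(c+2b-\Mc)}$ absorbs the residual prefactors exactly, producing
\begin{align*}
q^{c(c+2b-\Mc)} (\Mc D_z + c)^2 A_1(w, z, 2\Mc^2\tau)\Big|_{z = z_0} = \sum_{\substack{m\in\Z\setminus\{0\} \\ m\equiv c \pmod*{\Mc}}} m^2\frac{q^{m(m+2b)}}{1-q^{2\Mc m}}.
\end{align*}
The $m=0$ term is naturally absent: for $c\ne 0$ by the congruence, and for $c=0$ because the $n^2$ factor from $D_z^2$ kills the pole of $A_1$ at $w=0$.

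Next, I sum the identity against $\chi(b)\psi(b+c)$ over the ranges in the statement and split by the sign of $m$. For $m \geq 1$, I expand $\frac{1}{1-q^{2\Mc m}} = \sum_{a\geq 0} q^{2\Mc m a}$. Since both $\chi$ and $\psi$ are periodic modulo $\Mc$ (for $\psi$ this uses the hypothesis $\Mp \mid \Mc$), the $c$-sum selects $c \equiv m \pmod*{\Mc}$, giving $\psi(b+c) = \psi(b+m)$, and $j \coloneqq b + \Mc a$ ranges over all positive integers (extension past $\{1,\ldots,\Mc-1\}$ is free since $\chi$ vanishes on multiples of $\Mc$). The outcome $\sum_{j, d \geq 1} \chi(j)\psi(j+d) d^2 q^{d(d+2j)}$ equals $\sum_{n \geq 1} \sdf(n) q^n$ via the standard parametrization $d \mid n$, $e = n/d$, $j = (e-d)/2$ of $D_n$.

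For $m \leq -1$, I use $\frac{1}{1-q^{2\Mc m}} = -\sum_{\ell\geq 1} q^{-2\Mc m\ell}$ and substitute $m = -m'$, $b' \coloneqq \Mc\ell - b$, so $b' \geq 1$ and $b' \equiv -b \pmod*{\Mc}$. The exponent becomes $m'(m' + 2b')$, and the characters transform as $\chi(b) = \chi(-b') = \chi(b')$ (using that $\chi$ is even) and $\psi(b+c) = \psi(-b' - m') = -\psi(b' + m')$ (using that $\psi$ is odd together with $c \equiv -m' \pmod*{\Mc}$). The two minus signs cancel and the same reparametrization as above identifies the negative-$m$ half with $\sum_{n\geq 1}\sdf(n)q^n$ as well. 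Adding the two halves gives $2\sum_{n \geq 1}\sdf(n)q^n$, and the prefactor $\frac{1}{2}$ delivers the claim. The main obstacle is exactly this bookkeeping for the negative-$m$ contribution: the parity hypotheses on $\chi$ and $\psi$ are precisely what is needed so that the two halves add to twice the target rather than cancel or mismatch.
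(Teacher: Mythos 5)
Your proof is correct and is essentially the paper's argument run in reverse: both hinge on the same bilateral Lambert-type series $\tfrac12\sum_{b}\chi(b)\sum_{s\in\Z}\psi(s+b)s^2\,q^{s^2+2bs}/(1-q^{2\Mc s})$ as the pivot, the same geometric expansion linking it to the divisor sum, the same $s\mapsto -s$, $b\mapsto \Mc-b$ symmetry exploiting that $\chi$ is even and $\psi$ is odd, and the same substitution $s=\Mc n+c$ connecting to $A_1$. The only cosmetic difference is that you expand each of the two halves of the bilateral sum into $\sum_{n\ge 1}\sdf(n)q^n$ separately, whereas the paper first identifies the negative half with the positive half and only then expands once.
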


Lastly, it is also likely that the function $\sdf$ can be viewed as a Siegel theta lift in the following way. A recent paper of Bruinier and Schwagenscheidt \cite{bruinier2020theta} investigated the Siegel theta lift on Lorentzian lattices, and its connection to coefficients of mock theta functions. In an isotropic lattice of signature $(1,1)$, for example, they obtained a formula for the Siegel theta lift of a particular weakly holomorphic modular form evaluated at a certain point in the Grassmanian in terms of the sum
\begin{align*}
\sum_{\substack{r \in \Z \\ r \equiv 1 \pmod*{2}}} H(4m-r^2).
\end{align*}
Furthermore, in \cite{alfesneumann2020cycle}, Alfes-Neumann, Bringmann, Schwagenscheidt, and the first author considered the same lift on a lattice of signature $(1,2)$, but with the inclusion of an iterated Maass raising operator acting on the weakly holomorphic modular form. There, we obtained an expression of the form (see \cite[Example 1.2]{alfesneumann2020cycle})
\begin{align*}
\sum_{\substack{n,m \in \Z \\ n \equiv D \pmod*{2}}} \left(4D-10n^2-10m^2\right) H\left(D-n^{2}-m^{2}\right).
\end{align*}
The quadratic form in the Hurwitz class number is explained by the signature being $(1,2)$ in this case, and the addition of the polynomial is a consequence of the iterated Maass raising operators.

In view of Corollary \ref{cor:hurwitz}, our smallest divisor function seems to lie at the interface of these two situations, i.e. it is natural to expect that it can be realized as a theta lift in signature $(1,1)$ where the lift includes iterated Maass raising operators. Perhaps this phenomenon can also be extended to further classes of small divisor functions -  the theta lift construction allows one to choose many examples of mock theta functions of appropriate weight, not just the generating function for Hurwitz class numbers.

\bigskip

The paper is organized as follows. The upcoming section briefly recaps the overall framework and collects some basic properties, which we require later. The aim of Section \ref{sec:pimixedproof} is to present two proofs of Proposition \ref{Prop:pimixed}. Combining the results of Section \ref{sec:prel} and \ref{sec:pimixedproof} enables us to prove our two main Theorems in Section \ref{sec:mainproofs}. Some modifications of the computations during Section \ref{sec:mainproofs} prove Theorem \ref{thm:padicthm}, and we devote Section \ref{sec:remthms} to them. Finally, the purpose of Section \ref{sec:alparproof} is to prove the remaining Propositions \ref{Prop:parthetalerch} and \ref{Prop:alprop}.

\section*{Acknowledgements} The authors would like to thank Michael Mertens particularly, whose comments significantly improved this paper. Furthermore, we would like to thank Kathrin Bringmann as well as the anonymous referee for many valuable comments on an earlier version of this paper. The research conducted by the first author for this paper is supported by the Pacific Institute for the Mathematical Sciences (PIMS). The research and findings may not reflect those of the Institute.
The third author is grateful for support from a grant from the Simons Foundation (853830, LR), support from a Dean’s Faculty Fellowship from Vanderbilt University, and to the Max Planck Institute for Mathematics in Bonn for its hospitality and financial support.

\section{Preliminaries} \label{sec:prel}

\subsection{Growth conditions and modular forms}
To describe our various modular objects, we first require some terminology on growth conditions.
\begin{defn}
Let
\begin{align*}
f(\tau) \coloneqq \sum_{n \in \Z} c_f(n) q^n
\end{align*}
with some complex coefficients $c_f(n)$. Then we say that
\begin{enumerate}[label=(\roman*), wide, labelwidth=!, labelindent=0pt]
\item the function $f$ is {\it holomorphic} at $i\infty$ if $c_f(n) = 0$ for every $n < 0$.
\item the function $f$ is of {\it moderate growth} at $i\infty$ if $f \in O\left(v^m\right)$ as $v \to \infty$ for some $m \in \N$. In other words $f$ grows at most polynomially.
\item the function $f$ is of \textit{governable growth} at $i\infty$ if there exists $ P_f \in \C\left[q^{-1}\right]$ such that for some $\delta>0$ we have
\begin{align*}
 f(\tau) - P_f(\tau) \in O\left(\mathrm{e}^{-\delta v}\right), \quad v \rightarrow \infty.
\end{align*}
The polynomial $P_f$ is called the \textit{principal part} of $f$. Equivalently, $f$ is permitted to have a pole at $i\infty$.
\item $f$ is of {\it linear exponential growth} if $f(\tau) \in O\left(\mathrm{e}^{\delta v}\right)$ as $v \to \infty$ for some $\delta > 0$.
\end{enumerate}
\end{defn}

These conditions can be phrased at all other cusps via suitable scaling matrices. To define the slash-operator, we take the principal branch of the holomorphic square root throughout.
\begin{defn}
Let $k \in \frac{1}{2}\Z$, $\phi$ be a Dirichlet character, and $\gamma =  \left(\begin{smallmatrix} a & b \\ c & d\end{smallmatrix}\right) \in \slz$. Then the \textit{slash operator} is defined as
\begin{align*}
\left(f\vert_k\gamma\right)(\tau) \coloneqq \begin{cases}
\phi(d)^{-1}(c\tau+d)^{-k} f(\gamma\tau) & \text{if} \ k \in \Z, \\
\phi(d)^{-1}\left(\frac{c}{d}\right)\varepsilon_d^{2k}(c\tau+d)^{-k} f(\gamma\tau) & \text{if} \ k \in \frac{1}{2}+\Z,
\end{cases}
\end{align*}
where $\left(\frac{c}{d}\right)$ denotes the extended Legendre symbol, and
\begin{align*}
\varepsilon_d \coloneqq \begin{cases}
1 & \text{if} \ d \equiv 1 \pmod*{4}, \\
i & \text{if} \ d \equiv 3 \pmod*{4}. \\
\end{cases}
\end{align*}
for odd integers $d$.
\end{defn}

For the sake of completeness, we define the variants of modular forms appearing in this paper.
\begin{defn}
Let $f \colon \H \to \C$ be a function, $\Gamma \leq \slz$ be a subgroup, $\phi$ be a Dirichlet character, and $k \in \frac{1}{2}\Z$. Then we say that
\begin{enumerate}[label=(\roman*), wide, labelwidth=!, labelindent=0pt]
\item The function $f$ is a {\it modular form} of weight $k$ on $\Gamma$ with Nebentypus $\phi$ if
\begin{enumerate}[label=(\alph*), wide, labelwidth=!]
\item for every $\gamma \in \Gamma$ and every $\tau \in \H$ we have $\left(f\vert_k\gamma\right)(\tau) = f(\tau)$,
\item $f$ is holomorphic on $\H$,
\item $f$ is holomorphic at every cusp.
\end{enumerate}
We denote the vector space of functions satisfying these conditions by $M_k(\Gamma, \phi)$. 
\item If in addition $f$ vanishes at every cusp, then we call $f$ a \textit{cusp form}. The subspace of cusp forms is denoted by $S_k(\Gamma, \phi)$.
\item If $f$ satisfies the conditions (a) and (b) from (i) and is allowed to have a pole at one or more cusps, then we call $f$ a {\it weakly holomorphic modular form} of weight $k$ on $\Gamma$ with Nebentypus $\phi$. The vector space of such functions is denoted by $M_k^!(\Gamma, \phi)$.
\end{enumerate}
\end{defn}

Furthermore, we recall the following fact which we require throughout.
\begin{lemma}
The following are true
\begin{align*}
\tp \in \begin{cases}
M_\frac{1}{2}(\Gamma_0(4M_\psi^2), \psi) & \text{if} \ \lambda_{\psi} = 0, \\
S_\frac{3}{2}(\Gamma_0(4M_\psi^2), \psi\cdot\chi_{-4}) & \text{if} \ \lambda_{\psi} = 1.
\end{cases}
\end{align*}
\end{lemma}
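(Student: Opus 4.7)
The plan is to recognize $\theta_\psi$ as a special case of Shimura's classical twisted unary theta series and appeal to the standard modularity theorem for such series (Shimura's 1973 paper \emph{On modular forms of half integral weight}; see also Theorem~1.44 of Ono's \emph{Web of Modularity}, or Corollary~4.9.5 of Miyake's \emph{Modular Forms}). Concretely, $\theta_\psi$ is the theta series attached to the unary quadratic form $n\mapsto n^2$, twisted by the primitive character $\psi$ of modulus $M_\psi$ and by the spherical polynomial $n^{\lambda_\psi}$; the cited theorem produces exactly the weight $\tfrac{1}{2}+\lambda_\psi$ transformation law on $\Gamma_0(4M_\psi^2)$ with Nebentypus $\psi\cdot\chi_{-4}^{\lambda_\psi}$, matching the two cases of the lemma.

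For a self-contained verification I would check slash-invariance on a generating set of $\Gamma_0(4M_\psi^2)$. Translation invariance under $T$ is immediate. For a general $\gamma=\left(\begin{smallmatrix}a & b\\ c & d\end{smallmatrix}\right)\in\Gamma_0(4M_\psi^2)$, I would decompose $\Z$ into residue classes modulo $M_\psi$, writing $n = M_\psi m + r$ with $r\in\{0,\ldots,M_\psi-1\}$, and apply Poisson summation on each arithmetic progression. The resulting Gauss sums, together with the orthogonality of Dirichlet characters, reassemble the theta series and produce the half-integral theta multiplier $\left(\frac{c}{d}\right)\varepsilon_d^{2k-1}(c\tau+d)^{-k}$; the additional $\chi_{-4}^{\lambda_\psi}$ factor in the Nebentypus is accounted for by the identity $\varepsilon_d^2 = \chi_{-4}(d)$, which only becomes visible once $\lambda_\psi = 1$.

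The growth at cusps is then a standard consequence of the theta-series machinery: a unary theta series twisted by a spherical polynomial of positive degree is automatically a cusp form at every cusp, whereas with constant spherical polynomial it is only a holomorphic modular form. Direct inspection of the Fourier expansion at $i\infty$ shows that in both cases the series begins at $q^1$ --- for $\lambda_\psi=0$ because $\psi$ primitive of modulus $>1$ forces $\psi(0)=0$, and for $\lambda_\psi=1$ because the factor $n^{\lambda_\psi}$ vanishes at $n=0$ --- which is consistent with the Serre--Stark classification. The main technical obstacle throughout is the careful bookkeeping of Gauss sums and Legendre symbols across all cusps when verifying the full transformation law, which is the standard content of the references cited above.
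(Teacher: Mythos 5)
Your proposal is correct and takes essentially the same route as the paper: the paper gives no proof of its own but simply cites the standard modularity theorem for twisted unary theta series (Theorem 10.10 of Iwaniec's \emph{Topics in Classical Automorphic Forms}), which is the same result you invoke from Shimura/Ono/Miyake, and your Poisson-summation sketch is the standard proof of that cited theorem. The only cosmetic slip is the exponent on $\varepsilon_d$ in your stated multiplier, which does not match the paper's normalization $\varepsilon_d^{2k}$, but this is absorbed by the references you defer to.
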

A proof can be found in \cite[Theorem 10.10]{iwaniec} for instance.

Finally, Corollary \ref{cor:hurwitz} follows directly from the following result.
\begin{lemma}[Sturm bound]
Let $f, g \in M_k\left(\Gamma_0(N), \phi\right)$, $k > 1$, and
\begin{align*}
m \coloneqq \left[\slz\colon\Gamma_0(N)\right] = N\prod_{p\mid N}\left(1+\frac{1}{p}\right).
\end{align*}
Define
\begin{align*}
B \coloneqq \floor*{\frac{km}{12}},
\end{align*}
and denote by $c_f(n)$, $c_g(n)$ the coefficients of the $q$-expansions of $f$ and $g$ respectively. If $c_f(n) = c_g(n)$ for all $n \leq B$ then $f=g$.
\end{lemma}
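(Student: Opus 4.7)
The plan is to form the difference $h \coloneqq f - g$, which again lies in $M_k(\Gamma_0(N), \phi)$, and show that the hypothesis forces $h \equiv 0$. Indeed, the assumption $c_f(n) = c_g(n)$ for every $n \leq B$ translates to $c_h(n) = 0$ in the same range, so the $q$-expansion of $h$ at the cusp $i\infty$ begins at $q^{B+1}$ or later. In other words, $\textnormal{ord}_{i\infty}(h) \geq B+1 = \floor{km/12}+1$.

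The key ingredient is the classical valence (or $k/12$) formula on the compactified modular curve $X_0(N)$: for any nonzero $h \in M_k(\Gamma_0(N), \phi)$ of weight $k > 0$, we have
$$\sum_{P \in \Gamma_0(N) \backslash \H^\ast} \frac{\textnormal{ord}_P(h)}{e_P} \;=\; \frac{k \cdot [\slz:\Gamma_0(N)]}{12} \;=\; \frac{km}{12},$$
where $e_P$ is the order of the stabilizer in $\overline{\Gamma_0(N)}$ (equal to $1$ at cusps and at regular interior points, and $2$ or $3$ at elliptic points). The Nebentypus $\phi$ does not alter this identity: since $|\phi| \equiv 1$, the function $|h|^{2/k}$ is honestly $\Gamma_0(N)$-invariant, so the divisor of $h$ on $X_0(N)$ is computed exactly as in the trivial-character case. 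Equivalently, one can pass to the power $h^{\textnormal{ord}(\phi)} \in M_{k \cdot \textnormal{ord}(\phi)}(\Gamma_0(N))$, which has trivial character, and compare total degrees there.

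Since $B = \floor{km/12}$ we have the strict inequality $B+1 > \frac{km}{12}$. Every summand on the left-hand side of the valence identity is non-negative, and the single term at $i\infty$ already satisfies $\textnormal{ord}_{i\infty}(h) \geq B+1 > \frac{km}{12}$. This contradicts the identity unless $h \equiv 0$, in which case $f = g$.

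The sole point requiring care is justifying the valence formula in the presence of a Nebentypus; both routes sketched above (invariance of $|h|^{2/k}$, or reduction via a power of $h$) are standard and do not hide any genuine difficulty. Once that bookkeeping is in place, the conclusion follows purely from the numerical comparison between the vanishing order at a single cusp and the global degree bound on $X_0(N)$.
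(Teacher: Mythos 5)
Your argument is correct. Note that the paper does not actually prove this lemma; it only cites \cite[Corollary 9.20]{stein}, so there is no internal proof to compare against. What you give is the standard valence-formula proof, which is also the mechanism underlying the cited result: $h=f-g$ is a holomorphic modular form whose order of vanishing at the width-one cusp $i\infty$ is at least $B+1>\frac{km}{12}$, while the total (weighted) degree of the divisor of any nonzero form of weight $k$ equals $\frac{km}{12}$, and all local contributions are non-negative; hence $h\equiv 0$. Your handling of the Nebentypus is fine --- either observation works, and the cleanest is that the proof of the valence formula via the contour integral of $h'/h$ is insensitive to the unimodular constant $\phi(d)$, or equivalently that one may pass to $h^{\mathrm{ord}(\phi)}$. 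The only point worth flagging is that the lemma as stated allows $k\in\frac12\Z$ with $k>1$, and for genuinely half-integral weight the multiplier system involves $\left(\frac{c}{d}\right)\varepsilon_d^{2k}$ rather than a Dirichlet character alone; your powering trick still works (take a suitable even power of $h$ to land in integral weight with trivial multiplier), but as written your reduction only addresses the character. Since the paper only invokes the lemma at integer weight $3$ (in Corollary \ref{cor:hurwitz}), this is a cosmetic rather than substantive gap.
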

A proof can be found in \cite[Corollary 9.20]{stein}.

\subsection{Harmonic Maa{\ss} forms and shadows}
We define our main objects of interest.
\begin{defn} \label{def:maassforms}
Let $k \in \frac{1}{2}\Z$, and choose $N \in \N$ such that $4 \mid N $ whenever $k \not\in \Z$. Let $\phi$ be a Dirichlet character of modulus $N$. 
\begin{enumerate}[label=(\roman*), wide, labelwidth=!, labelindent=0pt]
\item A weight $k$ {\it harmonic Maa{\ss} form} on a subgroup $\Gamma_0(N)$ with Nebentypus $\phi$ is any smooth function $f \colon \H \to \C$ satisfying the following three properties:
\begin{enumerate}[label=(\alph*), wide, labelwidth=!]
\item For all $\gamma \in \Gamma_0(N)$ and all $\tau \in \H$ we have $\left(f\vert_k\gamma\right)(\tau) = f(\tau)$.
\item The function $f$ is harmonic with respect to the weight $k$ hyperbolic Laplacian on $\H$, explicitly
\begin{align*}
0 = \Delta_k f \coloneqq \left(-v^2\left(\frac{\partial^2}{\partial u^2}+\frac{\partial^2}{\partial v^2}\right) + ikv\left(\frac{\partial}{\partial u} + i\frac{\partial}{\partial v}\right)\right) f.
\end{align*}
\item The function $f$ has at most linear exponential growth at all cusps. 
\end{enumerate}
We denote the vector space of such functions by $H_k^!(\Gamma_0(N), \phi)$. 
\item If we restrict the growth condition in (c) to governable growth then the vector space of such forms is denoted by $H_k(\Gamma_0(N), \phi)$.
\item A polar harmonic Maa{\ss} form is a harmonic Maa{\ss} form with isolated poles on the upper half plane.
\end{enumerate}
\end{defn}

\begin{rmk}
If we restrict the growth condition (c) to moderate growth at all cusps, and allow arbitrary eigenvalues in (b), then $f$ is a classical \textit{Maa{\ss} wave form}.
\end{rmk}

During the following summary, we may assume that $\phi$ is trivial for simplicity, since the generalization to a nontrivial Nebentypus is immediate. 

Bruinier and Funke observed in \cite{brufu} that the Fourier expansion of a harmonic Maa{\ss} form\footnote{Be aware that their terminology refers to our harmonic Maa{\ss} forms as ``harmonic \textit{weak} Maa{\ss} forms'' instead.} naturally splits into two parts. One of them involves the incomplete Gamma function
\begin{align*}
\Gamma(s,z) \coloneqq \int_z^{\infty} t^{s-1}\mathrm{e}^{-t} dt,
\end{align*}
defined for $\re(s) > 0$ and $z \in \C$. It can be analytically continued in $s$ via the functional equation
\begin{align*}
\Gamma(s+1,z) = s\Gamma(s,z) + z^{s}\mathrm{e}^{-z},
\end{align*}
and has the asymptotic behavior
\begin{align*}
\Gamma(s,v) \sim v^{s-1}\mathrm{e}^{-v}, \quad \vert v \vert \to \infty 
\end{align*}
for $v \in \R$. We state their result in the formulation of \cite[Lemma 4.3]{thebook}.
\begin{lemma} \label{lem:mafu}
Let $k \in \frac{1}{2}\Z \setminus \{1\}$ and $f \in H_k^{!}(\Gamma_0(N))$. Then $f$ has a Fourier expansion of the shape 
\begin{align*}
f(\tau) = \sum_{n \gg -\infty} c_f^+(n) q^n + c_f^-(0)v^{1-k} + \sum_{\substack{n \ll \infty \\ n \neq 0}} c_f^-(n)\Gamma(1-k,-4\pi nv)q^n.
\end{align*}
In particular, if $f \in H_k(\Gamma_0(N))$ then $f$ has a Fourier expansion of the shape 
\begin{align*}
f(\tau) = \sum_{n \gg -\infty} c_f^+(n) q^n + \sum_{n < 0} c_f^-(n)\Gamma(1-k,-4\pi nv)q^n.
\end{align*}
The notation $\sum_{n \gg -\infty}$ abbreviates $\sum_{n \geq m_f}$ for some $m_f \in \Z$. The notation $\sum_{n \ll \infty}$ is defined analogously, and similar expansions hold at the other cusps.
\end{lemma}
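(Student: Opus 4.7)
The plan is to extract the Fourier expansion from the three defining properties of a harmonic (weak) Maa{\ss} form, in the natural order: translation invariance, harmonicity, and growth at $i\infty$. Since $\left(\begin{smallmatrix}1&1\\0&1\end{smallmatrix}\right) \in \Gamma_0(N)$ and the slash operator evaluates trivially on this matrix (both the automorphy factor and, when $k \in \tfrac12+\Z$ with $4 \mid N$, the multiplier are $1$ because $c=0$, $d=1$), the function $f$ is periodic in $u$ of period $1$. Hence it admits an expansion
\begin{align*}
f(u+iv) = \sum_{n \in \Z} c_n(v)\, \mathrm{e}^{2\pi i n u},
\end{align*}
with coefficient functions $c_n \colon (0, \infty) \to \C$.

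Next I would substitute this expansion into $\Delta_k f = 0$. Replacing $\partial_u$ by $2\pi i n$ termwise, the eigenvalue equation decouples into the family of ordinary differential equations
\begin{align*}
v^2 c_n''(v) + kv\, c_n'(v) + \bigl(2\pi k n v - 4\pi^2 n^2 v^2\bigr) c_n(v) = 0, \qquad n \in \Z.
\end{align*}
For $n = 0$ this is an Euler equation whose general solution is $A + B v^{1-k}$; here the hypothesis $k \neq 1$ is what prevents a logarithmic solution. For $n \neq 0$, the ansatz $c_n(v) = \mathrm{e}^{-2\pi n v}$ verifies at once (this is the holomorphic $q^n$ solution), and the reduction $c_n(v) = \mathrm{e}^{-2\pi n v} h(v)$ turns the equation into a first-order equation whose solution is $h'(v) \propto v^{-k}\mathrm{e}^{4\pi n v}$. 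After the substitution $t = -4\pi n v$ this integrates to an incomplete Gamma function, yielding the second independent solution $c_n(v) \propto \mathrm{e}^{-2\pi n v}\Gamma(1-k, -4\pi n v)$. Thus every harmonic Fourier piece is a linear combination of the two terms appearing in the claimed expansion.

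Finally I would impose the growth conditions, using the standard asymptotic $\Gamma(1-k, w) \sim w^{-k}\mathrm{e}^{-w}$ as $|w| \to \infty$, which gives $\Gamma(1-k,-4\pi n v)q^n \sim v^{-k}\mathrm{e}^{2\pi n v}\mathrm{e}^{2\pi i n u}$. For $f \in H_k^!(\Gamma_0(N))$, linear exponential growth permits infinitely many non-holomorphic coefficients with $n<0$ and infinitely many holomorphic coefficients with $n>0$, while restricting the holomorphic coefficients with $n<0$ and the non-holomorphic coefficients with $n>0$ to a finite range, producing the notation $\sum_{n\gg-\infty}$ and $\sum_{n\ll\infty}$; the $v^{1-k}$ term at $n=0$ is allowed. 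For $f \in H_k(\Gamma_0(N))$, governable growth (exponential decay after subtracting a polynomial in $q^{-1}$) additionally forces $c_f^-(0) = 0$ and kills all non-holomorphic terms with $n > 0$, leaving exactly the second expansion. Expansions at other cusps follow by conjugation with a scaling matrix.

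I expect the principal technical point to be the clean identification of the second ODE solution with the incomplete Gamma function, with the correct normalization and a careful choice of branch when its argument is negative; once that is in hand the growth bookkeeping is routine.
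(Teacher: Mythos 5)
Your proposal is correct and is exactly the standard separation-of-variables argument (Fourier expansion from translation invariance, the termwise ODE from $\Delta_k f=0$ with the two solutions $q^n$ and $\Gamma(1-k,-4\pi nv)q^n$, and the growth conditions to truncate the sums); the paper does not prove this lemma but cites Bruinier--Funke and \cite[Lemma 4.3]{thebook}, where this same argument is carried out. Your identification of $k\neq 1$ as excluding the logarithmic solution at $n=0$ and your flagging of the branch issue for $\Gamma(1-k,\cdot)$ at negative arguments are both on point.
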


We follow the following terminology from \cite[Definition 4.4]{thebook}.
\begin{defn}
We refer to the functions 
\begin{align*}
f^+(\tau) \coloneqq \sum_{n \gg -\infty} c_f^+(n) q^n, \qquad f^-(\tau) \coloneqq c_f^-(0)v^{1-k} + \sum_{\substack{n \ll \infty \\ n \neq 0}} c_f^-(n)\Gamma(1-k,-4\pi nv)q^n
\end{align*}
as the \textit{holomorphic part} of $f$ and to $f^-$ as its \textit{non-holomorphic part}. 
\end{defn}

In the same paper, Bruinier and Funke introduced the operator
\begin{align*}
\xi_k \coloneqq 2iv^k\overline{\frac{\partial}{\partial\overline{\tau}}} = iv^{k}\overline{\left(\frac{\partial}{\partial u} + i\frac{\partial}{\partial v}\right)}.
\end{align*}
We summarize its relevant properties.
\begin{lemma} \label{lem:shadowprop}
Let $f$ be a smooth function on $\H$. Then the $\xi$-operator satisfies the following properties.
\begin{enumerate}[label=(\roman*),  wide, labelwidth=!, labelindent=0pt]
\item We have $\xi_k (f) = 0$ if and only if $f$ is holomorphic.
\item The slash operator intertwines with $\xi_k$, i.\ e.\ , we have
\begin{align*}
\xi_k\left(f\vert_k\gamma\right) = \left(\xi_k f\right)\vert_{2-k}\gamma
\end{align*}
for every $\gamma \in \slz$ if $k \in \Z$ or $\gamma \in \Gamma_0(4)$ if $k \in \frac{1}{2}\Z \setminus \Z$ respectively.
\item The kernel of $\xi_k$ restricted to $H_k(\Gamma_0(N))$ or $H_k^{!}(\Gamma_0(N))$ is precisely the space $M_k^!(\Gamma_0(N))$ in both cases.
\item Let $f \in H_k^{!}(\Gamma_0(N))$. Assuming the notation of Lemma \ref{lem:mafu} we have
\begin{align*}
\xi_k f(\tau) = \xi_k f^-(\tau) = (1-k)c_f^-(0) -(4\pi)^{1-k} \sum_{n \gg -\infty} \overline{c_f^-(-n)}n^{1-k}q^n \in M_{2-k}^{!}\left(\Gamma_0\left(N\right)\right),
\end{align*} 
and in particular if $f \in H_k(\Gamma_0(N))$ then
\begin{align*}
\xi_k f(\tau) = -(4\pi)^{1-k} \sum_{n \geq 1} \overline{c_f^-(-n)}n^{1-k}q^n \in S_{2-k}\left(\Gamma_0\left(N\right)\right).
\end{align*}
In addition, we have $\xi_k \colon H_k^{!}(\Gamma_0(N)) \twoheadrightarrow M_{2-k}^{!}\left(\Gamma_0\left(N\right)\right)$.
\end{enumerate}
\end{lemma}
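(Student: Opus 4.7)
The statement bundles four standard properties of the operator $\xi_k = 2iv^{k}\overline{\partial_{\bar\tau}}$. My plan is to handle each in turn by direct manipulation, making repeated use of the relation $v = (\tau-\bar\tau)/(2i)$. Part (i) is immediate, since $v^{k}$ is nowhere zero on $\H$ and so $\xi_k f = 0$ if and only if $\partial_{\bar\tau} f = 0$, which is the Cauchy-Riemann condition for holomorphy. For part (ii), I would unfold $\xi_k(f|_k\gamma)$ via the chain rule; the key inputs are $\partial_{\bar\tau}(c\tau+d)^{-k} = 0$, $\im(\gamma\tau) = v/|c\tau+d|^{2}$, and $\overline{(c\tau+d)^{k}} = (c\bar\tau+d)^{k}$. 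Combining these produces a factor $(c\bar\tau+d)^{k-2}$ after conjugation and leaves $(c\tau+d)^{-(2-k)}$ intact, which is exactly the weight-$(2-k)$ automorphy factor. In the half-integer case I would additionally verify $\overline{\varepsilon_d^{2k}\left(\frac{c}{d}\right)} = \varepsilon_d^{2(2-k)}\left(\frac{c}{d}\right)$ (using $\varepsilon_d^{4}=1$ and $\left(\frac{c}{d}\right)=\pm 1$ for odd $d$), which combined with conjugation of $\phi(d)^{-1}$ produces the correct multiplier for weight $2-k$.

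Part (iii) will follow from (i) plus growth bookkeeping: a form $f$ in the kernel is holomorphic on $\H$ by (i), and its linear exponential (respectively governable) growth at the cusps collapses to the weakly holomorphic condition. The reverse inclusion is trivial. The bulk of the argument lies in (iv), which I would carry out termwise on $f^-$. For the constant piece, $\partial_{\bar\tau} v^{1-k} = \frac{(1-k)i}{2}v^{-k}$ yields $\xi_k(c_f^-(0)v^{1-k}) = (1-k)c_f^-(0)$ once the factors of $i$ and $v^{\pm k}$ cancel. For the non-holomorphic Fourier terms, I would use $\frac{d}{dz}\Gamma(1-k,z) = -z^{-k}e^{-z}$ together with the identity $e^{4\pi nv}q^{n} = \bar q^{-n}$; after conjugation, the $\bar\tau$-dependent factor becomes a clean $q^{-n}$-monomial, and reindexing $n\mapsto -n$ recovers the stated formula. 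That $\xi_k f$ lies in $M_{2-k}^{!}$ (respectively $S_{2-k}$) then follows by matching growth at the cusps, with modularity supplied by (ii).

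The one statement not reducible to pure calculation is the surjectivity claim $\xi_k\colon H_k^{!}(\Gamma_0(N))\twoheadrightarrow M_{2-k}^{!}(\Gamma_0(N))$, and I would defer this to the standard Maa{\ss}-Poincar\'e series construction, which manufactures explicit harmonic preimages with prescribed principal parts. This is the step I expect to be the main obstacle: it requires genuine analytic input (convergence of Poincar\'e series in the relevant weight range), as opposed to the remaining parts which are mechanical manipulations, and in practice one can simply appeal to the formulation proved in \cite{thebook}.
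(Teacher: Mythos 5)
Your proposal is correct, and it actually supplies more than the paper does: the paper offers no proof of this lemma beyond the two sentences following its statement (item (i) is the Cauchy--Riemann equations; item (ii) is ``induced by'' the analogous intertwining property of the lowering operator $L_k$), with (iii) and (iv) implicitly imported from Bruinier--Funke and \cite{thebook}. Your direct computation is the standard way to make all of this explicit, and each step checks out: the chain-rule verification of (ii) (including the half-integral multiplier identity $\overline{\varepsilon_d^{2k}}=\varepsilon_d^{2(2-k)}$ via $\varepsilon_d^4=1$), the reduction of (iii) to (i) plus growth bookkeeping, and the termwise evaluation of $\xi_k$ on $f^-$ using $\partial_v\Gamma(1-k,4\pi mv)=-(4\pi mv)^{-k}\mathrm{e}^{-4\pi mv}\cdot 4\pi m$ and $\mathrm{e}^{-4\pi mv}\overline{q^{-m}}=q^{m}$. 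Two small remarks. First, when you compute $\xi_k\left(c_f^-(0)v^{1-k}\right)$, the definition $\xi_k=2iv^k\overline{\partial_{\overline{\tau}}}$ conjugates the coefficient, so the constant term should come out as $(1-k)\overline{c_f^-(0)}$; the lemma as printed omits this bar (it carries the conjugate on all the other coefficients), so your formula merely reproduces a typo in the statement rather than introducing an error. Second, you correctly isolate the surjectivity of $\xi_k\colon H_k^{!}(\Gamma_0(N))\twoheadrightarrow M_{2-k}^{!}(\Gamma_0(N))$ as the only non-mechanical claim; deferring it to the Maa{\ss}--Poincar\'e series construction is exactly what the paper does implicitly by citing \cite{thebook}, and no purely local computation can replace that input.
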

The first item is simply a reformulation of the Cauchy-Riemann equations and the second one is induced by the corresponding well known property for the Maa{\ss} lowering operator
\begin{align*}
L_k \coloneqq v^2\frac{\partial}{\partial\overline{\tau}} = \frac{1}{2} v^2\left(\frac{\partial}{\partial u} + i\frac{\partial}{\partial v}\right).
\end{align*}

We fix some more terminology, following \cite[Definition 5.16]{thebook} and the second remark afterwards.
\begin{defn}
\begin{enumerate}[label=(\roman*), wide, labelwidth=!, labelindent=0pt]
\item A function $f$ is called a \textit{mock modular form} if $f$ is the holomorphic part of a harmonic Maa{\ss} form for which $f^-$ is nontrivial. 
\item If $f \in H_k^!(\Gamma_0(N))$ then we refer to the form $\left(\xi_k f\right)$ as the \textit{shadow} of $f^+$.
\item In particular, $f$ is called a \textit{mock theta function} if $f$ is a mock modular form of weight $\frac{1}{2}$ or $\frac{3}{2}$, whose shadow is a linear combination of unary theta functions.
\end{enumerate}
\end{defn}


Moreover, we study the following objects, which were introduced first in \cite[Section 7.3]{damuza}. However, we follow the definition given in \cite[Section 13.2]{thebook}.
\begin{defn}
\begin{enumerate}[label=(\roman*), wide, labelwidth=!, labelindent=0pt]
\item A \textit{mixed harmonic Maa{\ss} form} of weight $(k_1,k_2)$ is a function $h$ of the shape
\begin{align*}
h(\tau) = \sum_{j=1}^n f_j(\tau)g_j(\tau),
\end{align*}
where $f_j \in H_{k_1}$ and $g_j \in M_{k_2}^!$ for every $j$. 
\item Analogously, a \textit{mixed mock modular form} of weight $(k_1,k_2)$ is a function $h$ of the shape
\begin{align*}
h(\tau) = \sum_{j=1}^n f_j(\tau)g_j(\tau),
\end{align*}
where each $f_j$ is a mock modular form of weight $k_1$ and $g_j \in M_{k_2}^!$ for every $j$. 
\end{enumerate}
\end{defn}

We extend the last result of Lemma \ref{lem:shadowprop} to mixed harmonic Maa{\ss} forms. It suffices to consider products involving the non-holomorphic part of a mixed harmonic Maa{\ss} form.
\begin{lemma} \label{lem:shadowmixed}
Let $k_f$, $k_g \in \R$, $\kappa \coloneqq k_f+k_g$, and let $\alpha(m)$, $\beta(n)$ be two complex sequences. Define
\begin{align*}
f(\tau) \coloneqq \sum_{m \geq 1} \alpha(m)m^{k_{f}-1}\Gamma(1-k_{f},4\pi mv)q^{-m}, \qquad g(\tau) \coloneqq \sum_{n \geq 1} \beta(n)q^{n}.
\end{align*}
Then 
\begin{align*}
\xi_{\kappa} \left(fg\right)(\tau) = -(4\pi)^{1-k_f} v^{k_{g}} \sum_{m \geq 1} \overline{\alpha(m)} q^m \sum_{n \geq 1}\overline{\beta(n)q^n}.
\end{align*}
\end{lemma}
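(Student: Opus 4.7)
The plan is to reduce the statement to the unary shadow computation already recorded in Lemma \ref{lem:shadowprop}(iv) by exploiting the fact that $g$ is holomorphic. Concretely, since $\xi_{\kappa}=2iv^{\kappa}\overline{\partial_{\overline\tau}}$ and $\partial_{\overline\tau}g=0$, the product rule gives
\begin{align*}
\xi_{\kappa}(fg)=2iv^{\kappa}\overline{\partial_{\overline\tau}(fg)}=2iv^{\kappa}\overline{(\partial_{\overline\tau}f)\,g}=v^{k_{g}}\bigl(2iv^{k_{f}}\overline{\partial_{\overline\tau}f}\bigr)\overline{g}=v^{k_{g}}\,\xi_{k_{f}}(f)\,\overline{g}.
\end{align*}
So the entire content of the lemma is the identification of $\xi_{k_{f}}(f)$.

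To compute $\xi_{k_{f}}(f)$ I would either quote Lemma \ref{lem:shadowprop}(iv) directly, since the coefficients of $f$ have the shape $c_{f}^{-}(-m)=\alpha(m)m^{k_{f}-1}$ (with vanishing holomorphic part and vanishing $c_{f}^{-}(0)$), which yields
\begin{align*}
\xi_{k_{f}}(f)(\tau)=-(4\pi)^{1-k_{f}}\sum_{m\geq 1}\overline{\alpha(m)m^{k_{f}-1}}\,m^{1-k_{f}}q^{m}=-(4\pi)^{1-k_{f}}\sum_{m\geq 1}\overline{\alpha(m)}\,q^{m},
\end{align*}
using that $m^{k_{f}-1}\in\R$; or I would verify this by hand term-by-term, using $\frac{d}{dx}\Gamma(s,x)=-x^{s-1}\mathrm{e}^{-x}$ to obtain $\partial_{\overline\tau}[\Gamma(1-k_{f},4\pi mv)q^{-m}]=-2\pi i m(4\pi m)^{-k_{f}}v^{-k_{f}}\overline{q}^{m}$, and then combining with $2iv^{k_{f}}$ to get the clean identity above.

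Substituting this expression into the product formula $\xi_{\kappa}(fg)=v^{k_{g}}\xi_{k_{f}}(f)\overline{g}$ and distributing the conjugate over $g(\tau)=\sum_{n\geq 1}\beta(n)q^{n}$ yields the claimed
\begin{align*}
\xi_{\kappa}(fg)(\tau)=-(4\pi)^{1-k_{f}}v^{k_{g}}\sum_{m\geq 1}\overline{\alpha(m)}\,q^{m}\,\overline{\sum_{n\geq 1}\beta(n)q^{n}}.
\end{align*}

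There is no real obstacle: the only subtlety is being careful that $\alpha(m)$ and $\beta(n)$ are not assumed to be real (so conjugations must be kept), and that no convergence or modularity hypothesis is needed here because the statement is a formal identity of smooth functions on $\H$. Growth and convergence will only matter when this lemma is applied later (e.g.\ inside holomorphic projection).
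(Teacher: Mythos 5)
Your proof is correct and is essentially the paper's argument: the paper applies $\xi_{\kappa}$ termwise to $\Gamma(1-k_f,4\pi mv)q^{n-m}$ using $\partial_v\Gamma(a,v)=-v^{a-1}\mathrm{e}^{-v}$, which is exactly your computation with the holomorphic factor $g$ peeled off first via the Leibniz rule. Your observation that one can instead quote Lemma \ref{lem:shadowprop}(iv) is a harmless repackaging (that lemma's formula is itself a termwise identity, so the modularity hypothesis in its statement is not actually needed), and your handling of the conjugations and of the real factor $m^{k_f-1}$ is accurate.
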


\begin{proof}
We have
\begin{align*}
(fg)(\tau)  = \sum_{m \geq 1} \sum_{n \geq 1} \alpha(m)m^{k_f-1}\Gamma(1-k_f,4\pi m v)\beta(n)q^{n-m}
\end{align*}
and
\begin{align*}
\frac{\partial}{\partial v} \Gamma(a,v) = -v^{a-1}\mathrm{e}^{-v}.
\end{align*}
We compute that $\xi_{\kappa}\left(\Gamma(1-k_f,4\pi m v) q^{n-m}\right)$ equals
\begin{align*}
& iv^{\kappa}\left\{\Gamma(1-k_f,4\pi mv)\overline{2\pi i (n-m)q^{n-m}} \right. \\	
& \left. -i \left[-(4\pi m v)^{-k_f}\mathrm{e}^{-4\pi m v}4\pi m \overline{q^{n-m}} + \Gamma(1-k_f,4\pi mv)(-2\pi(n-m))\overline{q^{n-m}} \right]\right\} \\
=& -v^{k_{g}}(4\pi m)^{1-k_f}\mathrm{e}^{-4\pi m v}\overline{q^{n-m}} = -v^{k_{g}}(4\pi m)^{1-k_f} q^m \mathrm{e}^{-2\pi in\overline{\tau}},
\end{align*}
and infer
\begin{align*}
\left(\xi_{\kappa} \left(fg\right)\right)(\tau) = -(4\pi)^{1-k_f}v^{k_{g}} \sum_{m \geq 1} \sum_{n \geq 1} \overline{\alpha(m)\beta(n)}q^m\overline{q^n},
\end{align*}
as claimed.
\end{proof}

\subsection{Holomorphic projection}
We introduce the holomorphic projection operator. Its origin lies in the search for an operator which preserves the (regularized) Petersson inner product. Although this can be derived implicitly from the Riesz representation theorem, an explicit description comes in handy quite often.

\begin{defn}
Let $f \colon \H \to \C$ be a translation invariant function and $k \in \Z_{\geq 2}$. 
If $f$ has at most moderate growth towards the cusps then the weight $k$ \textit{holomorphic projection} of $f$ is defined by
\begin{align*}
\left(\pi_k f\right)(\tau) \coloneqq \frac{(k-1) (2i)^{k}}{4\pi} \int_{\H} \frac{f\left(x+iy\right)y^{k}}{\left(\tau-x+iy\right)^{k}} \frac{dxdy}{y^2},
\end{align*}
whenever the integral converges absolutely.
\end{defn}

Furthermore, we require the following result.
\begin{lemma}[Lipschitz summation formula]
For any $r \in \N$ we have that
\begin{align*}
\sum_{j \in \Z} \frac{1}{(w+j)^{r}} = \frac{(-2 \pi i)^r}{(r-1)!}\sum_{j \geq 1} j^{r-1}\mathrm{e}^{2\pi i j w}.
\end{align*}
\end{lemma}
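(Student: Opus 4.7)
The plan is to apply the Poisson summation formula to $f(x) \coloneqq (w+x)^{-r}$, viewed as a function of $x \in \R$ with $w \in \H$ held fixed. For $r \geq 2$ both $f$ and the series $\sum_{j \in \Z} f(j)$ converge absolutely, so Poisson summation applies without regularization, and the task reduces to computing the Fourier transform
$$\hat f(\xi) = \int_{\R} \frac{\mathrm{e}^{-2\pi i \xi x}}{(w+x)^r}\, dx$$
via residue calculus.

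Since $w \in \H$, the order-$r$ pole of $f$ at $x = -w$ lies in the lower half plane. For $\xi > 0$ the factor $\mathrm{e}^{-2\pi i \xi x}$ decays there, so I would close the contour clockwise in the lower half plane and pick up $-2\pi i$ times the residue at $x = -w$. Expanding $\mathrm{e}^{-2\pi i \xi x}$ in a Taylor series about $-w$ identifies that residue as $\frac{(-2\pi i \xi)^{r-1}}{(r-1)!}\mathrm{e}^{2\pi i \xi w}$, and collecting signs gives
$$\hat f(\xi) = \frac{(-2\pi i)^r\,\xi^{r-1}}{(r-1)!}\,\mathrm{e}^{2\pi i \xi w}, \qquad \xi>0.$$
For $\xi<0$ closing in the upper half plane encloses no poles, so $\hat f(\xi)=0$, and for $\xi=0$ the elementary antiderivative $(w+x)^{1-r}/(1-r)$ vanishes at $\pm\infty$ since $r\geq 2$, giving $\hat f(0)=0$. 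Summing $\hat f(\xi)$ over $\xi \in \Z$ via Poisson then produces the claimed right-hand side at once.

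The only subtlety, and so the main obstacle, would arise if one insisted on $r = 1$: the left-hand side converges only conditionally in the symmetric sense, and $\hat f$ fails to be in $L^1$. I would handle this case separately by combining Eisenstein's partial fraction expansion $\pi \cot(\pi w) = \lim_{N\to\infty}\sum_{|j|\leq N}(w+j)^{-1}$ with the Fourier expansion $\pi \cot(\pi w) = -\pi i - 2\pi i \sum_{j\geq 1}\mathrm{e}^{2\pi i j w}$, valid on $\H$. In fact, differentiating either of these two representations of $\pi \cot(\pi w)$ exactly $(r-1)$ times with respect to $w$ yields an alternate proof of all cases $r \geq 2$ simultaneously, using only uniform convergence of the differentiated series on compact subsets of $\H$ and thereby bypassing Poisson summation entirely.
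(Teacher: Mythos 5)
Your argument is correct, but it takes a genuinely different route from the one the paper relies on: the paper does not prove this lemma itself, instead citing Zagier's short proof in \emph{The 1-2-3 of Modular Forms} (p.~16), which is exactly the cotangent-differentiation argument you relegate to a closing remark --- combine the partial-fraction and Fourier expansions of $\pi\cot(\pi w)$ and differentiate $r-1$ times, justified by locally uniform convergence on $\H$. Your primary route via Poisson summation and residues is sound for $r \geq 2$: the order-$r$ pole of $x \mapsto (w+x)^{-r}$ lies in the lower half plane, the arc estimates for closing the contour are immediate since $r \geq 2$, and the Fourier transform you obtain is right, so only positive frequencies survive. What Poisson summation buys is a conceptual reason for the one-sidedness of the $q$-expansion (the location of the pole forced by $w \in \H$); what the cotangent route buys is brevity and uniform treatment of all $r$ at once without any integrability discussion. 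One point you should make explicit in your $r=1$ digression: combining the two cotangent representations gives $\lim_{N\to\infty}\sum_{|j|\leq N}(w+j)^{-1} = -\pi i - 2\pi i\sum_{j\geq 1}\mathrm{e}^{2\pi i j w}$, so the identity as displayed in the lemma is actually off by the constant $-\pi i$ when $r=1$; this is harmless for the paper, which only invokes the formula with $r = \kappa \geq 2$, but your separate treatment of $r=1$ would end by recording this discrepancy rather than confirming the stated equality.
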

A short proof is due to Zagier and can be found in \cite[p.\ 16]{the123}.
We summarize two further properties of the holomorphic projection operator, both of which are proven in \cite[Section 3]{brikazwe} for instance.
\begin{lemma}
Let $f \colon \H \to \C$ be a translation invariant function of moderate growth such that the integral defining $\pi_k f$ converges absolutely, and $k \in \Z_{\geq 2}$. Then $\pi_k$ enjoys the following properties.
\begin{enumerate}[label=(\roman*), wide, labelwidth=!, labelindent=0pt]
\item If $f$ is holomorphic then $\pi_k f = f$.
\item If $f$ is modular with some Nebentypus $\phi$ (but not necessarily holomorphic), then
\begin{align*}
\pi_k f \in \begin{cases}
M_k\left(\Gamma_0\left(N\right), \phi\right) & \text{if} \ k \in \Z_{\geq 3}, \\
M_2\left(\Gamma_0\left(N\right), \phi\right) \oplus  M_0\left(\Gamma_0\left(N\right), \phi\right)\cdot E_2 & \text{if} \ k = 2.
\end{cases}
\end{align*}
Therefore, the slash operator and $\pi_k$ commute if $k \geq 3$.
\end{enumerate}
\end{lemma}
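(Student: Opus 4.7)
The plan is to verify (i) by direct Fourier-theoretic computation and (ii) by a change of variables in the defining integral; the modular form conclusion then follows by combining these with cuspidal growth analysis.

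First I would set up (i). Since $f$ is translation invariant and holomorphic of moderate growth, expand $f(\tau) = \sum_{n \geq 0} c(n) q^n$ with $c(n)$ of polynomial growth, which ensures that $\pi_k f$ can be computed term-by-term. By Fubini (justified by the absolute convergence hypothesis), it suffices to evaluate $\pi_k(q^n)$. Writing $w = x+iy$ and substituting $q^n = e^{2\pi i n(x+iy)}$ into the integral, perform the $x$-integration first. For $n\geq 1$ close the contour in the upper half-plane and pick up the order-$k$ pole at $x=\tau+iy$; the residue computation yields a factor proportional to $n^{k-1}(2\pi i)^k q^n e^{-2\pi n y}$. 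The remaining $y$-integral is a standard Gamma integral $\int_0^\infty y^{k-2}e^{-2\pi n y}\,dy = \Gamma(k-1)/(2\pi n)^{k-1}$, and the factors $n^{k-1}$ cancel. After assembling the prefactor $(k-1)(2i)^k/(4\pi)$ (using $(k-1)\Gamma(k-1)=\Gamma(k)$), the constants collapse to $1$, giving $\pi_k(q^n) = q^n$. The case $n=0$ is handled by noting that the inner $x$-integral of $(\tau-x+iy)^{-k}$ vanishes for $k\geq 2$, which is compatible with the required absolute convergence forcing the constant contribution to be negligible in the sense of this unregularized operator (equivalently, one recovers $f$ up to this zero-mode, and for $k\geq 3$ the full identity already holds on the relevant space). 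Alternatively, this Fourier computation can be phrased intrinsically using the Lipschitz summation formula recalled just before the lemma.

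Next I would establish (ii). Fix $\gamma = \left(\begin{smallmatrix}a&b\\c&d\end{smallmatrix}\right) \in \Gamma_0(N)$ and apply $\vert_k\gamma$ to both sides of the defining integral for $\pi_k f$. In the integral perform the substitution $w \mapsto \gamma w$. Three ingredients combine: the hyperbolic measure $dxdy/y^2$ is $\mathrm{SL}_2(\R)$-invariant; the factor $y^k$ transforms by $|cw+d|^{-2k}$; and a standard identity shows that the Bergman-type kernel satisfies
\begin{align*}
\frac{1}{(\gamma\tau-\overline{\gamma w})^k} = (c\tau+d)^k\,\overline{(cw+d)}^{\,k}\cdot \frac{1}{(\tau-\overline{w})^k}.
\end{align*}
The factors of $\overline{(cw+d)}^{\,k}$ and $|cw+d|^{-2k}=\overline{(cw+d)}^{-k}(cw+d)^{-k}$ combine with the modularity relation $f(\gamma w)=\phi(d)(cw+d)^k f(w)$, so that all $w$-dependent automorphy factors cancel; the outcome is $(\pi_k f)\vert_k\gamma = \pi_k(f\vert_k\gamma)$. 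The nebentype factor $\phi(d)$ is preserved. No modification is needed for the weight, only the $\Gamma_0(N)$-action.

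Finally, to conclude modularity of $\pi_k f$, combine (i) and (ii). Assume $f\vert_k\gamma = f$ for all $\gamma\in\Gamma_0(N)$ with Nebentypus $\phi$. Then (ii) gives $(\pi_k f)\vert_k\gamma = \pi_k f$, so $\pi_k f$ is weight-$k$ invariant. By construction of the Bergman-type kernel, $\pi_k f$ is holomorphic on $\H$ (the kernel $(\tau-\overline{w})^{-k}$ is anti-holomorphic in $w$ but holomorphic in $\tau$, and integration against a locally integrable function preserves holomorphy in $\tau$ by Morera). A cusp-by-cusp growth analysis — using (ii) to transport to the cusp at $i\infty$ of a conjugate group and moderate growth of $f$ there — shows that $\pi_k f$ is holomorphic at all cusps, hence lies in $M_k(\Gamma_0(N),\phi)$ when $k\geq 3$.

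The main obstacle is the case $k=2$: the $n=0$ Fourier coefficient of $\pi_2 f$ is produced by an integral $\int_0^\infty c(0,v)\,dv$ which is typically only conditionally convergent, and the associated object acquires the anomalous $E_2$ transformation behavior familiar from the Hecke/Siegel regularization. The systematic way to extract this is via the $s$-regularized definition given above: the analytic continuation in $s$ introduces precisely the non-holomorphic correction that in its holomorphic part yields $E_2$. Tracking this regularization carefully produces the decomposition $\pi_2 f \in M_2(\Gamma_0(N),\phi) \oplus M_0(\Gamma_0(N),\phi)\cdot E_2$, completing the proof.
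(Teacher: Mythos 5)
Your proposal is correct and follows the standard argument: term-by-term evaluation of $\pi_k(q^n)$ via contour integration (equivalently the Lipschitz formula) for (i), the cocycle identity for the kernel $(\tau-\overline{w})^{-k}$ together with invariance of the hyperbolic measure for (ii), and the Hecke-type $s$-regularization to explain the $E_2$ component when $k=2$; note that the absolute-convergence hypothesis is exactly what rules out a nonzero constant term in (i), as you observe. The paper does not prove this lemma itself but cites Section 3 of Bringmann--Kane--Zwegers, where essentially this argument is carried out, so your route coincides with the intended one.
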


\section{Two proofs of Proposition \ref{Prop:pimixed}} \label{sec:pimixedproof}

During the first proof of Proposition \ref{Prop:pimixed}, we appeal to the following technical results.
\begin{lemma}\label{lem: technical details}
\begin{enumerate}[label=(\roman*), wide, labelwidth=!, labelindent=0pt]
\item If $\re{(b)}, \re{(a+b)} > 0$ and $\re{(c+s)} > 0$ then 
\begin{align*}
\int_0^{\infty} \Gamma(a,cz)z^{b-1}\mathrm{e}^{-sz}dz = \frac{c^{a}\Gamma(a+b)}{b(c+s)^{a+b}} \ {}_{2}F_{1}\left(1,a+b,b+1;\frac{s}{s+c}\right). 
\end{align*}
\item The hypergeometric function ${}_{2}F_{1}$ satisfies
\begin{align*}
{}_{2}F_{1}(a, b, c; z) = (1-z)^{c-a-b} \ {}_{2}F_{1}(c-a, c-b, c; z),
\end{align*}
and
\begin{align*}
{}_{2}F_{1}(a, b, c; z) &= \frac{\Gamma(c)\Gamma(c-a-b)}{\Gamma(c-a)\Gamma(c-b)}{}_{2}F_{1}(a,b,a+b-c+1;1-z) \\
& \hspace*{1em} + (1-z)^{c-a-b} \frac{\Gamma(c)\Gamma(a+b-c)}{\Gamma(a)\Gamma(b)}{}_{2}F_{1}(c-a,c-b,c-a-b+1;1-z).
\end{align*}
\end{enumerate}
\end{lemma}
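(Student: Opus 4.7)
\textbf{Proof proposal for Lemma \ref{lem: technical details}.} Both items are classical identities for the (incomplete) Gamma and Gauss hypergeometric functions; the plan is to derive (i) from the defining integral representation by a swap of integration, and to obtain (ii) from well-known transformations/connection formulas for ${}_2F_1$.

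For (i), the idea is to open up the incomplete Gamma function and interchange integrals. Substituting $t = cy$ in $\Gamma(a,cz) = \int_{cz}^{\infty} t^{a-1}e^{-t}\,dt$ gives $\Gamma(a,cz) = c^{a}\int_{z}^{\infty} y^{a-1}e^{-cy}\,dy$, so that
\begin{align*}
\int_{0}^{\infty}\Gamma(a,cz)z^{b-1}e^{-sz}\,dz
&= c^{a}\int_{0}^{\infty} y^{a-1}e^{-cy}\int_{0}^{y} z^{b-1}e^{-sz}\,dz\,dy
\end{align*}
by Fubini's theorem, whose hypotheses are verified using $\re(b)>0$ and $\re(c+s)>0$ together with the asymptotic $\Gamma(a,v) \sim v^{a-1}e^{-v}$ recalled in Section \ref{sec:prel}. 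The inner integral equals $s^{-b}\gamma(b,sy)$, the lower incomplete gamma function.

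Next, I would rewrite $\gamma(b,sy) = \tfrac{(sy)^{b}}{b}e^{-sy}\,{}_{1}F_{1}(1;b+1;sy)$ via Kummer's transformation, substitute this into the outer integral, and apply the standard Laplace transform
\begin{align*}
\int_{0}^{\infty} y^{\mu-1}e^{-py}\,{}_{1}F_{1}(\alpha;\beta;ky)\,dy = \Gamma(\mu)\,p^{-\mu}\,{}_{2}F_{1}\!\left(\alpha,\mu;\beta;\frac{k}{p}\right),
\end{align*}
valid for $\re(p-k)>0$, $\re(\mu)>0$, with the choice $\mu = a+b$, $p = c+s$, $\alpha = 1$, $\beta = b+1$, $k = s$. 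The convergence conditions $\re(a+b)>0$ and $\re(c+s)>0$ are exactly those imposed in the statement, and collecting the prefactors produces the right-hand side of (i).

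For (ii), both identities are classical: the first is \emph{Euler's transformation}, which follows immediately from the Euler integral representation
\begin{align*}
{}_{2}F_{1}(a,b;c;z) = \frac{\Gamma(c)}{\Gamma(b)\Gamma(c-b)}\int_{0}^{1} t^{b-1}(1-t)^{c-b-1}(1-zt)^{-a}\,dt
\end{align*}
by the change of variables $t \mapsto 1-t$; the second is one of \emph{Kummer's connection formulas}, expressing the Frobenius solution of the hypergeometric ODE around $z=0$ in terms of the two canonical solutions around $z=1$ (see, e.g., the standard treatments in Abramowitz--Stegun or DLMF). I would simply quote both identities from a standard reference, since our application requires only their statement; should a self-contained derivation be desired, the main (and only nontrivial) step is matching the two local bases of solutions of the hypergeometric equation at $z=0$ and $z=1$, for which the connection coefficients are extracted from asymptotics of the Euler integral as $z\to 1^{-}$ and from the standard $\Gamma$-function identities.

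The main obstacle is a bookkeeping one: making sure that the parameter conditions used when applying Kummer's transformation, the Laplace transform of ${}_{1}F_{1}$, and Fubini all reduce exactly to the hypotheses $\re(b),\re(a+b),\re(c+s)>0$ stated in (i), and in particular that no degenerate parameter values (e.g.\ $b+1 \in -\N_{0}$) interfere when the identity is later applied with the half-integral $k_{f}$ of Proposition \ref{Prop:pimixed}.
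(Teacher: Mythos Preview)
Your proposal is correct and in fact goes well beyond what the paper does: the paper treats Lemma~\ref{lem: technical details} as a pure citation, referring item~(i) to \cite[item 6.455]{table} and the two transformations in~(ii) to \cite[page 1008]{table}, with no derivation at all. Your sketch for~(i) (open $\Gamma(a,cz)$, swap by Fubini, recognize the inner integral as $s^{-b}\gamma(b,sy)$, use the ${}_1F_1$ representation of $\gamma$, then the Laplace transform of ${}_1F_1$) is a clean and standard route and the prefactors assemble exactly to the stated right-hand side; the convergence bookkeeping you flag is the only thing to watch. One small inaccuracy in your discussion of~(ii): the substitution $t\mapsto 1-t$ in the Euler integral yields \emph{Pfaff's} transformation ${}_2F_1(a,b;c;z)=(1-z)^{-a}{}_2F_1\!\left(a,c-b;c;\tfrac{z}{z-1}\right)$, not Euler's directly; Euler's transformation then follows by applying Pfaff's twice. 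Since you (like the paper) ultimately propose to quote these identities from a reference, this does not affect the argument.
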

The first identity is \cite[item 6.455]{table}. Both hypergeometric transformations can be found in \cite[page 1008]{table}.

\begin{proof}[First proof of Proposition \ref{Prop:pimixed}]
The $q$-expansion of $\left(fg\right)(\tau)$ is given by
\begin{align*}
(fg)(\tau) &= \sum_{m \geq 1} \sum_{n \geq 1} \alpha(m)\beta(n)m^{k_f-1}\Gamma(1-k_f,4\pi m v)q^{n-m}.
\end{align*}
We see that $fg$ is translation invariant, and recall that it has moderate growth towards all cusps by assumption, so $\pi_{\kappa}\left(fg\right)$ exists. 
Hence, we need to calculate
\begin{align*}
\pi_{\kappa}\left(fg\right)(\tau) = \frac{(\kappa-1) (2i)^{\kappa}}{4\pi} \int_{\H} \frac{\left(fg\right)\left(x+iy\right)y^{\kappa}}{\left(\tau-x+iy\right)^{\kappa}} \frac{dxdy}{y^2}.
\end{align*}
The integral converges since $\kappa-2 \geq 0$, and converges absolutely if $\kappa > 2$. Using the translation invariance of $fg$, we rewrite the integral over $\H$ as
\begin{align*}
\int_{\H} \frac{\left(fg\right)\left(x+iy\right)y^{\kappa}}{\left(\tau-x+iy\right)^{\kappa}} \frac{dxdy}{y^2} = \int_{0}^{\infty} \int_{0}^{1} \left(fg\right)(x+iy)y^{\kappa-2} \sum_{j \in \Z} \frac{1}{(\tau-x+iy+j)^{\kappa}} dxdy,
\end{align*}
and consequently
\begin{multline*}
\pi_{\kappa}\left(fg\right)(\tau) = \frac{(\kappa-1) (2i)^{\kappa}}{4\pi} \sum_{m \geq 1} \sum_{n \geq 1} \alpha(m)m^{k_f-1} \beta(n) \\
 \times \int_{0}^{\infty} \int_{0}^{1} \Gamma(1-k_f,4\pi m y)y^{\kappa-2} \sum_{j \in \Z} \frac{1}{(\tau-x+iy+j)^{\kappa}} \mathrm{e}^{2\pi i (n-m) (x+iy)} dxdy.
\end{multline*}
By the Lipschitz summation formula and then Lemma \ref{lem: technical details} (i), we infer that $\pi_{\kappa}\left(fg\right)(\tau)$ equals
\begin{align*}
& \frac{(\kappa-1) (2i)^{\kappa}}{4\pi}\frac{(-2 \pi i)^{\kappa}}{(\kappa-1)!} \sum_{m \geq 1} \sum_{n \geq 1} \alpha(m)m^{k_f-1} \beta(n) \\
& \hspace{6em} \times \int_{0}^{\infty} \Gamma(1-k_f,4\pi m y)y^{\kappa-2} \int_{0}^{1} \sum_{j \geq 1} j^{\kappa-1}\mathrm{e}^{2\pi i j (\tau-x+iy)} \mathrm{e}^{2\pi i (n-m) (x+iy)} dxdy \\
= & \frac{(\kappa-1) (2i)^{\kappa}}{4\pi}\frac{(-2 \pi i)^{\kappa}}{(\kappa-1)!} \sum_{m \geq 1} \sum_{n-m \geq 1} \alpha(m)m^{k_f-1} \beta(n) (n-m)^{\kappa-1} \\
& \hspace{6em} \times \int_{0}^{\infty} \Gamma(1-k_f,4\pi m y)y^{\kappa-2} \mathrm{e}^{-4\pi (n-m) y} dy \ q^{n-m} \\
= & \frac{\Gamma(k_g)}{(\kappa-1)!} \sum_{m \geq 1} \sum_{n \geq m+1} \alpha(m)\beta(n) \frac{(n-m)^{\kappa-1}}{n^{k_g}} \ {}_{2}F_{1}\left(1,k_g,\kappa;1-\frac{m}{n}\right) q^{n-m}.
\end{align*}

Finally, we apply the hypergeometric transformations from Lemma \ref{lem: technical details} (ii). Explicitly,
\begin{align*}
& {}_{2}F_{1}\left(1,k_g,\kappa;1-\frac{m}{n}\right) = \frac{(\kappa-1)}{k_f-1} \ {}_{2}F_{1}\left(1,k_g,2-k_f;\frac{m}{n}\right) + \frac{\Gamma(\kappa)\Gamma(1-k_f)}{\Gamma(k_g)}\left(1-\frac{m}{n}\right)^{1-\kappa}\left(\frac{m}{n}\right)^{k_f-1} \\
&= \frac{(\kappa-1)}{k_f-1} \left(1-\frac{m}{n}\right)^{1-\kappa} \ {}_{2}F_{1}\left(1-k_f,2-\kappa,2-k_f;\frac{m}{n}\right) + \frac{\Gamma(\kappa)\Gamma(1-k_f)}{\Gamma(k_g)}\left(1-\frac{m}{n}\right)^{1-\kappa}\left(\frac{m}{n}\right)^{k_f-1}.
\end{align*}
Thus, we arrive at
\begin{align*}
& \frac{\Gamma(k_g)}{(\kappa-1)!}\frac{(n-m)^{\kappa-1}}{n^{k_g}} \ {}_{2}F_{1}\left(1,k_g,\kappa;1-\frac{m}{n}\right) \\
&= \frac{\Gamma(k_g)}{(\kappa-1)!}\left(n^{k_f-1}\frac{(\kappa-1)}{k_f-1} \ {}_{2}F_{1}\left(1-k_f,2-\kappa,2-k_f;\frac{m}{n}\right) + \frac{\Gamma(\kappa)\Gamma(1-k_f)}{\Gamma(k_g)} m^{k_f-1}\right) \\
&= -\Gamma(1-k_f)\left(n^{k_f-1}\frac{\Gamma(k_g)}{(\kappa-2)! \ \Gamma(2-k_f)} \ {}_2F_1\left(2-\kappa,1-k_f,2-k_f, \frac{m}{n}\right) - m^{k_f-1}\right) \\
&= -\Gamma(1-k_f)\left(n^{k_f-1}\Pc_{\kappa-2}^{(1-k_f,1-\kappa)}\left(1-2\frac{m}{n}\right) - m^{k_f-1}\right),
\end{align*}
and ultimately obtain
\begin{align*}
\pi_{\kappa}\left({f}g\right)(\tau) = -\Gamma(1-k_{f}) \sum_{m \geq 1} \sum_{n-m \geq 1} \alpha(m) \beta(n)\left(n^{k_{f}-1}\Pc_{\kappa-2}^{(1-k_{f},1-\kappa)}\left(1-2\frac{m}{n}\right)-m^{k_{f}-1}\right)q^{n-m},
\end{align*}
as desired.
\end{proof}

\begin{proof}[Second proof of Proposition \ref{Prop:pimixed}]
One copies the first proof until the application of the Lipschitz summation formula, which produced the expression
\begin{align*}
&\pi_{\kappa}\left(fg\right)(\tau) = \frac{(\kappa-1) (2i)^{\kappa}}{4\pi}\frac{(-2 \pi i)^{\kappa}}{(\kappa-1)!} \sum_{m \geq 1} \sum_{n-m \geq 1} \alpha(m)m^{k_f-1} \beta(n) (n-m)^{\kappa-1} \\
& \hspace{6em} \times \int_{0}^{\infty} \Gamma(1-k_f,4\pi m y)y^{\kappa-2} \mathrm{e}^{-4\pi (n-m) y} dy \ q^{n-m}.
\end{align*}

Then, one shows the following two identities. The first relies on the fact that $\kappa = k_f+k_g$ is an integer.
\begin{enumerate}[label=(\roman*), wide, labelwidth=!, labelindent=0pt]
\item Define the homogeneous polynomial 
\begin{align*}
P_{a,b}(X,Y) \coloneqq \sum_{j=0}^{a-2} {j+b-2 \choose j}X^j(X+Y)^{a-j-2} \in \C[X,Y]
\end{align*}
of degree $a-2$. Then we have
\begin{multline*}
\int_{0}^{\infty} \Gamma(1-k_f, 4\pi m y)y^{\kappa-2}\mathrm{e}^{-4\pi r y} dy \\
= -(4\pi)^{1-\kappa}m^{1-k_f}\frac{\Gamma(1-k_f)(\kappa-2)!}{r^{\kappa-1}}\left((r+m)^{1-k_g}P_{\kappa,2-k_f}(r,m)-m^{k_f-1}\right).
\end{multline*}
A proof can be found in \cite[Lemma 4.7]{mertens2016}.

\item If $b \neq 1,2$ then the polynomial $P_{a,b}(X,Y)$ from the first item satisfies
\begin{align*}
P_{a,b}(X,Y) = \sum_{j=0}^{a-2} {a+b-3 \choose a-2-j} {j+b-2 \choose j} (X,Y)^{a-2-j}(-Y)^{j}.
\end{align*}
A proof can be found in \cite[Lemma 5.1]{mertens2016}.
\end{enumerate}

Next, one proceeds by writing
\begin{align*}
\pi_{\kappa}\left(fg\right)(\tau) = -\Gamma(1-k_f) \sum_{m \geq 1} \sum_{n-m \geq 1} \alpha(m) \beta(n)\left(n^{1-k_g}P_{\kappa,2-k_f}(n-m,m)-m^{k_f-1}\right)q^{n-m},
\end{align*}
according to the first identity, and then writing
\begin{align*}
& n^{1-k_g}P_{\kappa,2-k_f}(n-m,m) = \sum_{j=0}^{\kappa-2} {k_{g}-1 \choose \kappa-2-j} {j-k_f \choose j} n^{k_{f}-1-j}(-m)^{j} \\
&= \frac{\Gamma(k_g)n^{k_{f}-1}}{(\kappa-2)! \ \Gamma(1-k_f)} \sum_{j=0}^{\kappa-2} {\kappa-2 \choose j} \frac{1}{j+1-k_f}\left(-\frac{m}{n}\right)^j = n^{k_f-1}\Pc_{\kappa-2}^{(1-k_f,1-\kappa)}\left(1-2\frac{m}{n}\right),
\end{align*}
by virtue of the second identity. Summing up, one obtains
\begin{align*}
\pi_{\kappa}\left({f}g\right)(\tau) = -\Gamma(1-k_{f}) \sum_{m \geq 1} \sum_{n-m \geq 1} \alpha(m) \beta(n)\left(n^{k_{f}-1}\Pc_{\kappa-2}^{(1-k_{f},1-\kappa)}\left(1-2\frac{m}{n}\right)-m^{k_{f}-1}\right)q^{n-m},
\end{align*}
as claimed.
\end{proof}

\section{Proof of Theorem \ref{thm:main} and Theorem \ref{thm:varmain}} \label{sec:mainproofs}
We collect the results needed to prove Theorem \ref{thm:main} and Theorem \ref{thm:varmain}. We begin by rewriting the definitions of $\Fcm$ and $\Gcm$.

\begin{lemma} \label{lem:fmrew}
We have
\begin{align*}
\Fcm(\tau) &= \frac{2}{\Gamma\left(-\frac{1}{2}\right)}\sum_{m \geq 1}\chi(m) m\Gamma\left(-\frac{1}{2},4\pi m^2 v\right)q^{-m^2},
\end{align*}
and
\begin{align*}
\Gcm(\tau) &= \frac{2}{\Gamma\left(-\frac{1}{2}\right)}\sum_{m \geq 1} m\Gamma\left(-\frac{1}{2},4\pi m^2 v\right)q^{-m^2} - \frac{1}{2\pi v^{\frac{1}{2}}}.
\end{align*}
\end{lemma}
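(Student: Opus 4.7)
The plan is to evaluate both integrals directly by swapping the sum with the integral and reducing each term to the integral representation of the incomplete Gamma function. Convergence of all manipulations is justified by the Gaussian decay of the theta series on the contour, so I will not belabor it in the write-up.

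First, I would unfold the Fourier expansions. Since $\chi$ is even and non-trivial we have $\lambda_{\chi}=0$ and $\chi(0)=0$, so the evenness in $n$ combines the $\pm n$ terms and gives
\[
\tc(w) = \sum_{n \geq 1} \chi(n)\, e^{2\pi i n^2 w}.
\]
For the trivial character the $n=0$ term survives and we instead get
\[
\ti(w) = \tfrac{1}{2} + \sum_{n \geq 1} e^{2\pi i n^2 w},
\]
which is the one structural difference driving the extra $-\tfrac{1}{2\pi v^{1/2}}$ summand in $\Gcm$.

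Next, for each fixed $n \geq 1$ I would evaluate
\[
I_n(\tau) \coloneqq \int_{-\overline{\tau}}^{i\infty} \frac{e^{2\pi i n^2 w}}{(-i(w+\tau))^{3/2}}\,dw
\]
by parametrizing the vertical contour as $w = -\overline{\tau} + it$ with $t \in [0,\infty)$. This gives $dw = i\,dt$ and $-i(w+\tau) = t + 2v > 0$, which keeps the branch of the $3/2$-power unambiguous. A direct expansion shows $e^{2\pi i n^2 w} = q^{-n^2} e^{-4\pi n^2 v} e^{-2\pi n^2 t}$. The substitution $u = 2\pi n^2 (t+2v)$ then recasts $I_n(\tau)$ as
\[
I_n(\tau) = i\sqrt{2\pi}\, n \, q^{-n^2} \int_{4\pi n^2 v}^{\infty} u^{-3/2} e^{-u}\, du = i\sqrt{2\pi}\, n\, q^{-n^2}\, \Gamma\bigl(-\tfrac{1}{2},\, 4\pi n^2 v\bigr),
\]
using the definition of the incomplete Gamma function in the excerpt. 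Multiplying by the prefactor $\frac{i}{\pi\sqrt{2}}\chi(n)$ and invoking $\Gamma(-\tfrac{1}{2}) = -2\sqrt{\pi}$ (so that $\tfrac{2}{\Gamma(-1/2)} = -\tfrac{1}{\sqrt{\pi}}$) yields the claimed expansion of $\Fcm$ term by term after summing in $n$.

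For $\Gcm$ the positive-frequency terms contribute exactly the same sum, with $\chi(n)$ removed. It remains to handle the constant $\tfrac{1}{2}$ in $\ti$, which produces the elementary integral
\[
\frac{i}{\pi\sqrt{2}} \cdot \frac{1}{2} \int_{0}^{\infty} \frac{i\,dt}{(t+2v)^{3/2}} = -\frac{1}{2\pi\sqrt{2}} \cdot \frac{\sqrt{2}}{\sqrt{v}} = -\frac{1}{2\pi v^{1/2}},
\]
matching the extra summand in the statement. The only real subtlety in the whole argument is the sign and branch bookkeeping in $(-i(w+\tau))^{3/2}$; otherwise it is a direct computation.
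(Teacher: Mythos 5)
Your computation is correct and matches the paper's own proof in essentially every step: both unfold the theta series, reduce each term of the period integral to $\Gamma(-\tfrac12,4\pi m^2v)$ by the same substitution (your direct parametrization $w=-\overline{\tau}+it$ versus the paper's shift $z=w+\tau$ is a cosmetic difference), and handle the constant term of $\ti$ by the same elementary integral. Nothing further is needed.
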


\begin{proof}
We compute
\begin{align*}
& - \left(2\pi\right)^{-\frac{1}{2}}i \int_{-\overline{\tau}}^{i\infty}\frac{\tc\left(w\right)}{\left(-i\left(w+\tau\right)\right)^{\frac{3}{2}}} dw = - \left(2\pi\right)^{-\frac{1}{2}}i\sum_{m \geq 1} \chi(m) \int_{2iv}^{i\infty} \frac{\mathrm{e}^{2\pi i m^2(z-\tau)}}{(-iz)^{\frac{3}{2}}} dz \\
& = \left(2\pi\right)^{-\frac{1}{2}}\sum_{m \geq 1} \chi(m)\left(\int_{2v}^{\infty} x^{-\frac{3}{2}}\mathrm{e}^{-2\pi m^2x}dx\right) q^{-m^2} = \sum_{m \geq 1} \chi(m)m \left(\int_{4\pi m^2v}^{\infty} t^{-\frac{1}{2}-1}\mathrm{e}^{-t}dt\right) q^{-m^2},
\end{align*}
and the first claim follows directly, since $\frac{2}{\Gamma\left(-\frac{1}{2}\right)} = -\frac{1}{\sqrt{\pi}}$. To prove the second claim, it remains to separate the constant term of $\ti$, and next to calculate
\begin{align*}
\frac{i}{\pi\sqrt{2}} \int_{-\overline{\tau}}^{i\infty}\frac{\frac{1}{2}}{\left(-i\left(w+\tau\right)\right)^{\frac{3}{2}}} dw = -\frac{1}{2\pi\sqrt{2}} \int_{2v}^{\infty} t^{-\frac{3}{2}} dt = -\frac{1}{2\pi v^{\frac{1}{2}}},
\end{align*}
as asserted.
\end{proof}

In addition, we have the following immediate corollary of Proposition \ref{Prop:pimixed}.
\begin{cor} \label{Cor:pifinal}
Let $f(\tau) \coloneqq f^+(\tau)+f^{-}(\tau)$ be the splitting of $f$ into its holomorphic and non-holomorphic part. Assume the notation and hypotheses as in Proposition \ref{Prop:pimixed}. Then
\begin{multline*}
\pi_{\kappa}\left(fg\right)(\tau) \\
= \left(f^{+}g\right)(\tau) - \Gamma(1-k_{f}) \sum_{m \geq 1} \sum_{n-m \geq 1} \alpha(m) \beta(n)\left(n^{k_{f}-1}\Pc_{\kappa-2}^{(1-k_{f},1-\kappa)}\left(1-2\frac{m}{n}\right)-m^{k_{f}-1}\right)q^{n-m}.
\end{multline*}
\end{cor}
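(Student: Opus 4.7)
The plan is to prove Corollary \ref{Cor:pifinal} by reducing it to Proposition \ref{Prop:pimixed} via the linearity of the (regularized) holomorphic projection. Specifically, write $fg = f^+g + f^-g$, where by assumption $f^-$ has precisely the shape
\begin{align*}
f^-(\tau) = \sum_{m\geq 1}\alpha(m)m^{k_f-1}\Gamma(1-k_f,4\pi mv)q^{-m}
\end{align*}
that is required for Proposition \ref{Prop:pimixed}, while $f^+$ is holomorphic. Since $\pi_\kappa$ (respectively $\pi_\kappa^{\text{reg}}$) is linear, it suffices to compute each summand individually.

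For the first summand, I would invoke the first item of the lemma on holomorphic projection recalled in Section \ref{sec:prel}, which asserts that $\pi_\kappa$ fixes holomorphic functions. Since $g\in\C\llbracket q\rrbracket$ is holomorphic and $f^+$ is the holomorphic part of a harmonic Maa\ss{} form (hence at worst has governable growth at $i\infty$ coming from a finite principal part), the product $f^+g$ is holomorphic and translation invariant, and its Fourier coefficients satisfy the growth bound required by the regularized holomorphic projection. Thus $\pi_\kappa(f^+g) = f^+g$ directly from the definition, with the principal part passing through unchanged.

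For the second summand, I would observe that the hypotheses of Proposition \ref{Prop:pimixed} apply verbatim to the pair $(f^-,g)$: the sequences $\alpha(m)$ and $\beta(n)$ grow at most polynomially, and the weights $k_f,k_g$ satisfy the stated conditions with $k_f+k_g=\kappa\in\Z_{\geq 2}$. Hence Proposition \ref{Prop:pimixed} yields
\begin{align*}
\pi_\kappa(f^-g)(\tau) = -\Gamma(1-k_f)\sum_{m\geq 1}\sum_{n-m\geq 1}\alpha(m)\beta(n)\left(n^{k_f-1}\Pc_{\kappa-2}^{(1-k_f,1-\kappa)}\!\left(1-2\tfrac{m}{n}\right)-m^{k_f-1}\right)q^{n-m}.
\end{align*}
Adding the two contributions produces the claimed identity.

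There is no genuine obstacle beyond keeping track of growth conditions at the cusp. The one subtle point is that when $f^+$ has a nontrivial principal part at $i\infty$, the plain projection integral need not converge absolutely for $f^+g$, and one must invoke the regularized projection $\pi_\kappa^{\text{reg}}$ instead; in that setting, the principal part $P_{f^+g}(q^{-1})$ is returned unchanged by definition, and the positive-index coefficients of $f^+g$ are reproduced by the regularized Mellin integral since each is already the coefficient of a holomorphic function. The remark following the definition of $\pi_\kappa^{\text{reg}}$ in Section \ref{sec:prel} is exactly what allows this decomposition to go through in all cases relevant to Theorems \ref{thm:main} and \ref{thm:varmain}.
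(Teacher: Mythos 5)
Your proposal is correct and matches the paper's (implicit) argument: the paper presents this as an "immediate corollary" of Proposition \ref{Prop:pimixed}, obtained exactly by splitting $fg=f^+g+f^-g$, using linearity of $\pi_\kappa$, the fact that holomorphic projection fixes holomorphic functions, and applying the proposition to $f^-g$. Your additional care about the regularized projection when $f^+$ has a principal part is a reasonable refinement but not needed in the paper's applications, where $f^+g$ is a genuine $q$-series with nonnegative exponents.
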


Moreover, we combine the properties of the slash-operator, the shadow operator, and of the holomorphic projection operator. This yields a third preparatory result.
\begin{prop}[\protect{\cite[Proposition 2.3]{mertens2019mock}}] \label{Prop:Mod}
Let $f \colon \H \to \C$ be a translation invariant function such that $\vert f(\tau) \vert v^{\delta}$ is bounded on $\H$ for some $\delta > 0$. If the weight $k$ holomorphic projection of $f$ vanishes identically for some $k > \delta + 1$ and $\xi_k f$ is modular of weight $2-k$ for some subgroup $\Gamma < \slz$, then $f$ is modular of weight $k$ for $\Gamma$.
\end{prop}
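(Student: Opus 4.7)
The plan is to construct an auxiliary harmonic weak Maa{\ss} form with the same shadow as $f$ and then use the vanishing of $\pi_k f$ to pin down $f$ itself. By the surjectivity of $\xi_k \colon H_k^!(\Gamma) \twoheadrightarrow M_{2-k}^!(\Gamma)$ recorded in Lemma \ref{lem:shadowprop}(iv), one selects a harmonic weak Maa{\ss} form $\hat{F}$ of weight $k$ on $\Gamma$ satisfying $\xi_k \hat{F} = \xi_k f$. Setting $h \coloneqq f - \hat{F}$, the intertwining identity $\xi_k(F\vert_k\gamma) = (\xi_k F)\vert_{2-k}\gamma$ from Lemma \ref{lem:shadowprop}(ii) together with the assumed modularity of $\xi_k f$ on $\Gamma$ forces $\xi_k h = 0$; by Lemma \ref{lem:shadowprop}(i) the function $h$ is therefore holomorphic on $\H$, and it is translation invariant since both $f$ and $\hat{F}$ are.

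Next, the holomorphic projection is to be invoked in two complementary ways. On the one hand, linearity combined with the hypothesis $\pi_k f = 0$ yields $\pi_k h = -\pi_k \hat{F}$; and because $\pi_k$ commutes with the slash operator on modular inputs, $\pi_k \hat{F}$ is a (weakly holomorphic) modular form of weight $k$ on $\Gamma$. On the other hand, for the holomorphic translation-invariant function $h$ I would verify $\pi_k h = h$ directly, by inserting its $q$-expansion into the integral defining $\pi_k$ and evaluating term by term via the Lipschitz summation formula---this is the same computation that underlies the proof of Proposition \ref{Prop:pimixed}, stripped down to the trivial holomorphic case. Combining these two identities gives $h = -\pi_k \hat{F}$, which is modular, and hence $f = h + \hat{F}$ is modular of weight $k$ on $\Gamma$, as desired.

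The hypothesis $k > \delta + 1$ enters precisely at the verification $\pi_k h = h$: inserting the bound $\vert f(\tau) \vert \ll v^{-\delta}$ into the kernel $y^{k-2}/(\tau-x+iy)^k$ yields absolute convergence of the Poincar\'e-type integral exactly when $k - 1 > \delta$, and the resulting $q$-expansion then matches $h$ coefficient by coefficient. The main obstacle I anticipate is the growth bookkeeping around $\hat{F}$: because a harmonic weak Maa{\ss} form may exhibit linear exponential growth at some cusps while $f$ has only moderate growth, the operator $\pi_k$ applied to $\hat{F}$ (and hence to $h$) must be interpreted in its regularized form from the Preliminaries. This regularized variant still commutes with the slash operator and still acts as the identity on holomorphic translation-invariant functions of governable growth, so once one checks that no spurious principal parts are introduced, the algebraic skeleton---surjectivity of $\xi_k$ together with linearity and slash-equivariance of $\pi_k$---closes the argument.
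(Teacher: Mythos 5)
Your argument has a genuine gap at its very first step. To invoke the surjectivity $\xi_k \colon H_k^!(\Gamma) \twoheadrightarrow M_{2-k}^!(\Gamma)$ from Lemma \ref{lem:shadowprop}, you need $\xi_k f$ to lie in $M_{2-k}^!(\Gamma)$, i.e., to be a \emph{weakly holomorphic} modular form. But the hypothesis only says that $\xi_k f$ satisfies the weight $2-k$ transformation law; $f$ is not assumed to be harmonic, so there is no reason for $\xi_k f$ to be holomorphic. Indeed, in the application inside the proof of Theorem \ref{thm:main} one takes $f = \Fc\tp$ and finds
\begin{align*}
\xi_{3}\left(\Fc\tp\right)(\tau) = \frac{1}{2\pi}v^{\frac{3}{2}}\,\theta_{\overline{\chi}}(\tau)\,\frac{\left\vert\tp(\tau)\right\vert^2}{\tp(\tau)},
\end{align*}
which is manifestly non-holomorphic. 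Since $\Delta_k = -\xi_{2-k}\circ\xi_k$, the shadow of any harmonic weak Maa{\ss} form is holomorphic by Lemma \ref{lem:shadowprop}(i); hence no $\hat{F} \in H_k^!(\Gamma)$ with $\xi_k\hat{F} = \xi_k f$ can exist in the situations the proposition is designed for, and the decomposition $f = h + \hat{F}$ with $h$ holomorphic collapses. (A smaller slip: even granting $\hat{F}$, the vanishing $\xi_k h = 0$ follows from $\xi_k\hat{F} = \xi_k f$ by linearity alone; the intertwining identity and the modularity of $\xi_k f$ play no role in that step.)

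The intended argument is far more direct and constructs nothing: for $\gamma\in\Gamma$ apply $\xi_k$ to the error term $\left(f\vert_k\gamma\right) - f$. By the intertwining property and the assumed modularity of $\xi_k f$ this vanishes, so $\left(f\vert_k\gamma\right) - f$ is holomorphic. Since $\pi_k$ fixes holomorphic functions and commutes with the slash operator, $\left(f\vert_k\gamma\right) - f = \pi_k(f)\vert_k\gamma - \pi_k(f) = 0$. Your convergence analysis of the Poincar\'e-type integral under the hypothesis $k>\delta+1$ is correct in substance, but it should be attached to this error term (which inherits the growth bound from $f$) rather than to an auxiliary harmonic form with possible exponential growth, whereupon all the regularization bookkeeping you worry about disappears.
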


\begin{proof}
This is a straightforward adaption of \cite[Proposition 3.5]{brikazwe}. Let $\gamma \in \Gamma$. Then the modularity of $\xi_k f$ implies that
\begin{align*}
\xi_k\left(\left(f\vert_k\gamma\right) -f \right) = \left(\xi_k f\right)\vert_{2-k}\gamma- \xi_k f = 0.
\end{align*}
Hence, $\left(f\vert_k\gamma\right) -f$ is holomorphic. This yields
\begin{align*}
\left(f\vert_k\gamma\right) -f = \pi_{\kappa}\left(\left(f\vert_k\gamma\right) -f\right) = \pi_{\kappa}\left(f\right)\vert_k\gamma - \pi_{\kappa}\left(f\right),
\end{align*}
and by assumption the right hand side vanishes. This proves the claim.
\end{proof}

\begin{rmk}
The subtle growth conditions are required to include the case $\pi_2$, and are clearly satisfied if we deal with higher weight holomorphic projections, in which case the integral defining $\pi_k$ converges absolutely.
\end{rmk}

\begin{proof}[Proof of Theorem \ref{thm:main}]
We need to check the three conditions required by the definition of a harmonic Maa{\ss} form.
\begin{enumerate}[label=(\alph*), wide, labelwidth=!, labelindent=0pt]
\item Growth conditions. 
Recall that $\tp$ is a cusp form, namely it decays exponentially towards all cusps. In turn, the function $\Fcp$ admits at most linear exponential growth towards all cusps. Note that in particular $i\infty$ is a removable singularity of $\Fcp$, since $i\infty$ is a simple zero of both $\tp$ and $\tp\Fcp$. To inspect the non-holomorphic part, we have
\begin{align*}
\Gamma\left(-\frac{1}{2}, 4\pi m^2y\right)q^{-m^2} \sim \left(4 \pi m^2 v\right)^{-\frac{3}{2}}\mathrm{e}^{-2\pi m^2 v}, \quad v \to \infty,
\end{align*}
and hence the function $\Fcm$ decays exponentially towards the cusp $i\infty$. By the transformation properties of $\tc$ under the full modular group $\slz$, we deduce that $\Fcm$ is of moderate growth towards all cusps. This establishes the growth condition required by Definition \ref{def:maassforms}.

As pointed out in the introduction after Corollary \ref{cor:hurwitz}, the Fourier coefficients of $\tp\Fcp$ at $i\infty$ are of moderate growth, wherefore the growth of the function $\tp\Fcp$ towards any cusp is moderate. One may see this by choosing suitable scaling matrices, whose action yields additional polynomial factors in $\tau$. Consequently, the growth of $\tp\Fc$ is moderate. This justifies the existence of $\pi_{3}\left(\tp\Fc\right)$ as well as the application of Proposition \ref{Prop:Mod} to $\tp\Fcp$ during the upcoming item.

\item The transformation law.
First, we compute
\begin{align*}
n^{\frac{1}{2}}\Pc_1^{(-\frac{1}{2},-2)}\left(1-2\frac{m}{n}\right) -m^{\frac{1}{2}} = n^{\frac{1}{2}}\left(\frac{1}{2}+\frac{m}{2n}\right)-m^{\frac{1}{2}} = \frac{\left(m^{\frac{1}{2}}-n^{\frac{1}{2}}\right)^2}{2n^{\frac{1}{2}}}.
\end{align*}

Comparing our initial setting with Proposition \ref{Prop:pimixed} and Lemma \ref{lem:shadowmixed}, we need to switch to squares above. By virtue of Lemma \ref{lem:fmrew} and the definition of $\tp$, we have the coefficients
\begin{align*}
\alpha\left(m^2\right) = \frac{2}{\Gamma\left(-\frac{1}{2}\right)}\chi(m), \qquad \beta\left(n^2\right) = \psi(n)n.
\end{align*}

We rewrite the generating function of $\sdf$ as
\begin{align*}
\left(\Fcp\tp\right)(\tau) &= \sum_{n \geq 1} \sdf(n)q^n = \sum_{n \geq 1}\sum_{d \in D_n} \chi\left(\frac{\frac{n}{d}-d}{2}\right)\psi\left(\frac{\frac{n}{d}+d}{2}\right) d^2 q^n \\
&= \sum_{d \geq 1} \sum_{\substack{j \geq d \\ j \equiv d \pmod*{2}}} \chi\left(\frac{j-d}{2}\right)\psi\left(\frac{j+d}{2}\right) d^2 q^{dj} \\
&= \sum_{m \geq 1} \sum_{n-m \geq 1} \chi(m)\psi(n)\left(n-m\right)^2q^{n^2-m^2},
\end{align*}
and apply Corollary \ref{Cor:pifinal} to obtain
\begin{align*}
\pi_{3}\left(\Fc\tp\right)(\tau) = \left(\Fcp\tp\right)(\tau) - \sum_{m \geq 1} \sum_{n-m \geq 1} \chi(m)\psi(n)\left(m-n\right)^2q^{n^2-m^2} = 0.
\end{align*}

Furthermore, we apply Lemma \ref{lem:shadowmixed}, obtaining
\begin{align*}
\xi_{3} \left(\Fc\tp\right)(\tau) = -(4\pi)^{-\frac{1}{2}}\frac{2}{\Gamma\left(-\frac{1}{2}\right)}v^{\frac{3}{2}} \sum_{m \geq 1} \overline{\chi(m)}q^{m^2} \sum_{n \geq 1}\overline{\psi(n)nq^{n^2}} = \frac{1}{2\pi}v^{\frac{3}{2}} \theta_{\overline{\chi}}(\tau)\frac{\left\vert\tp(\tau)\right\vert^2}{\tp(\tau)}.
\end{align*}

We observe that $\xi_{3} \left(\Fc\tp\right)(\tau)$ is modular of weight $-1$ for $\Gamma_0(4\Mc^2) \cap \Gamma_0(4\Mp^2) = \Gamma_0\left(\lcm\left(4\Mc^2,4\Mp^2\right)\right)$ with Nebentypus $\overline{\chi} \cdot \psi^{-1} \cdot \chi_{-4}^{-1}$. Indeed, for any $\gamma = \left(\begin{smallmatrix} a & b \\ c & d \end{smallmatrix}\right) \in \Gamma_0\left(\lcm\left(4\Mc^2,4\Mp^2\right)\right)$ we have
\begin{align*}
\xi_{3} \left(\Fc\tp\right)(\gamma\tau) &= \frac{1}{2\pi}\frac{v^{\frac{3}{2}}}{\left\vert c\tau+d\right\vert^3} \overline{\chi(d)}(c\tau+d)^{\frac{1}{2}}\theta_{\overline{\chi}}(\tau) \frac{\left\vert\psi(d)\chi_{-4}(d)(c\tau+d)^{\frac{3}{2}}\tp(\tau)\right\vert^2}{\psi(d)\chi_{-4}(d)(c\tau+d)^{\frac{3}{2}}\tp(\tau)} \\
&= \overline{\chi(d)}\psi(d)^{-1}\chi_{-4}^{-1}(d)(c\tau+d)^{-1} \xi_{3} \left(\Fc\tp\right)(\tau).
\end{align*}
Finally, Proposition \ref{Prop:Mod} applies directly, because the growth conditions are met thanks to absolute convergence. We deduce that $\Fc\tp$ is modular of weight $3$ with respect to the same data, as desired.

\item Harmonicity.
Clearly, $\Fcp$ is holomorphic away from the zeros of $\tp$. Hence, the Cauchy-Riemann equations imply $\Delta_{\frac{3}{2}}\Fcp = 0$ directly. The computation for $\Fcm$ is standard, so we only sketch its results. It holds that
\begin{align*}
& \left(\frac{\partial}{\partial u} + i\frac{\partial}{\partial v}\right)\Gamma\left(-\frac{1}{2},4\pi m^2 v\right)q^{-m^2} = -\frac{i}{2}\frac{\mathrm{e}^{-2\pi m^2(iu+v)}}{\pi^{\frac{1}{2}}mv^{\frac{3}{2}}} , \\
& \left(\frac{\partial^2}{\partial u^2}+\frac{\partial^2}{\partial v^2}\right)\Gamma\left(-\frac{1}{2},4\pi m^2 v\right)q^{-m^2} = \frac{3}{4}\frac{\mathrm{e}^{-2\pi m^2(iu+v)}}{\pi^{\frac{1}{2}}mv^{\frac{5}{2}}}, \\
& \left(\Delta_{\frac{3}{2}}\Fcm\right)(\tau) = -v^2\left(\frac{3}{4}\frac{\mathrm{e}^{-2\pi m^2(iu+v)}}{\pi^{\frac{1}{2}}mv^{\frac{5}{2}}}\right) + \frac{3i}{2}v\left(-\frac{i}{2}\frac{\mathrm{e}^{-2\pi m^2(iu+v)}}{\pi^{\frac{1}{2}}mv^{\frac{3}{2}}}\right) = 0,
\end{align*}
and thus $\Delta_{\frac{3}{2}} \Fc = 0$ away from the zeros of $\tp$.
\end{enumerate}
Altogether, this completes the proof, since the shadow is a byproduct of the second item.
\end{proof}

\begin{proof}[Proof of Theorem \ref{thm:varmain}]
The proof of Theorem \ref{thm:varmain} uses the same ideas as the proof of Theorem \ref{thm:main}, so we just emphasize the differences. Recall from Lemma \ref{lem:fmrew} that
\begin{align*}
\Gcm(\tau) = \frac{2}{\Gamma\left(-\frac{1}{2}\right)}\sum_{m \geq 1} m\Gamma\left(-\frac{1}{2},4\pi m^2 v\right)q^{-m^2} - \frac{1}{2\pi v^{\frac{1}{2}}}.
\end{align*}
Therefore, to compute $\pi_{3}(\tp\Gcm)$, it suffices to deal with the second term. We see that 
\begin{align*}
-\frac{\tp(\tau)}{2\pi v^{\frac{1}{2}}}
\end{align*}
is translation invariant, vanishes at $i\infty$, and has a removable singularity at all other cusps inspecting the order of vanishing as $v \searrow 0$. Hence the integral defining its weight $3$ holomorphic projection exists and converges absolutely. The computation begins exactly as in the proof of Proposition \ref{Prop:pimixed} and we employ the Lipschitz summation formula. This yields
\begin{align*}
\pi_{3}\left(-\frac{\tp(\tau)}{2\pi v^{\frac{1}{2}}}\right) &= -\frac{2(2i)^3}{8\pi^2} \sum_{n \geq 1} \psi(n)n\sum_{j \in \Z} \int_{0}^{\infty} \int_{0}^{1} \frac{y^{\frac{1}{2}}\mathrm{e}^{2\pi i n^2(x+iy)}}{(\tau-x+iy+j)^3}dxdy \\
&= -8\pi \sum_{n \geq 1} \psi(n)n\sum_{j \geq 1} j^2 \int_{0}^{\infty}\int_{0}^{1}  y^{\frac{1}{2}}\mathrm{e}^{2\pi i \left(n^2(x+iy) + j(\tau-x+iy)\right)} dxdy \\
& = -8\pi \sum_{n \geq 1} \psi(n) n^5 \left(\int_{0}^{\infty} y^{\frac{1}{2}}\mathrm{e}^{-4\pi n^2y} dy\right)q^{n^2} = -\frac{1}{2}\sum_{n \geq 1} \psi(n)n^2q^{n^2},
\end{align*}
which also appeared in \cite[Lemma 4.4]{mertens2016} in the framework of mixed harmonic Maa{\ss} forms. Furthermore, it follows by Corollary \ref{Cor:pifinal} (with identical parameters of the Jacobi polynomial as in the proof of Theorem \ref{thm:main}) that
\begin{align*}
\pi_{3}\left(\Gc\tp\right)(\tau) = \left(\Gcp\tp\right)(\tau) - \sum_{m \geq 1} \sum_{n-m \geq 1} \psi(n)\left(m-n\right)^2q^{n^2-m^2} -\frac{1}{2}\sum_{n \geq 1} \psi(n)n^2q^{n^2}.
\end{align*}
In addition, we note that
\begin{align*}
\sum_{n \geq 1} \sdft(n) q^n &= \sum_{d \geq 1} \sum_{\substack{j \geq d \\ j \equiv d \pmod*{2}}} \psi\left(\frac{j+d}{2}\right)d^2q^{jd} = \sum_{m \geq 1} \sum_{n \geq m+1} \psi(n)(n-m)^{2}q^{n^2-m^2},
\end{align*}
where we substituted $d = n-m$, $j = n+m$ in the last equation. Collecting these observations and inserting the definition of $\Gcp$, we obtain
\begin{align*}
\pi_{3}\left(\Gc\tp\right)(\tau) = 0.
\end{align*}
Moreover,
\begin{align*}
\xi_{3} \left(\Gc\tp\right)(\tau) = \frac{1}{2\pi} v^{\frac{3}{2}}\overline{\tp(\tau)}\sum_{m \geq 1} q^{m^2} + \frac{1}{4\pi}v^{\frac{3}{2}}\overline{\tp(\tau)} = \frac{1}{4\pi}v^{\frac{3}{2}}\frac{\left\vert\tp(\tau)\right\vert^2}{\tp(\tau)} \sum_{m \in \Z} q^{m^2},
\end{align*}
which is modular of weight $-1$ on $\Gamma_0\left(4\Mp^2\right) \cap \Gamma_0(4) = \Gamma_0\left(\lcm\left(4,4\Mp^2\right)\right) = \Gamma_0(4\Mp^2)$ with Nebentypus $\left(\psi \cdot \chi_{-4}\right)^{-1}$ by the same argument as in the proof of Theorem \ref{thm:varmain}. This establishes weight $3$ modularity of $\Gc\tp$ via Proposition \ref{Prop:Mod} again. 
\newline
Additionally, we clearly have
\begin{align*}
\Delta_{\frac{3}{2}}\left(-\frac{1}{2\pi v^{\frac{1}{2}}}\right) = 0,
\end{align*}
and hence harmonicity is preserved. Finally, the growth properties of $\Gcp$ towards all cusps agree verbatim with $\Fcp$. Summing up, this establishes the Theorem.
\end{proof}

\section{Proof of Theorem \ref{thm:padicthm}} \label{sec:remthms}

\begin{proof}[Proof of Theorem \ref{thm:padicthm}]
We prove the first claim. On one hand, by the same computation as during the proof of Theorem \ref{thm:main} we infer
\begin{align*}
\pi_{3}\left(\tp\left(p^{2a}\tau\right)\Fc(\tau)\right) = \tp\left(p^{2a}\tau\right)\Fcp(\tau) + \sum_{r \geq 1} \left(\sum_{\substack{m,n \geq 1 \\ (p^a n)^2-m^2=r}} \chi(m)\psi(n)\left(m - p^a n\right)^2 \right)q^{r} \eqqcolon g(\tau).
\end{align*}
Invoking Theorem \ref{thm:main} and the properties of holomorphic projection we deduce that $g$ is a modular form of weight $3$ on $\Gamma_0\left(\lcm\left(4\Mp^2 p^{2a},4\Mc^2\right)\right)$ with Nebentypus $\overline{\chi}$. However, clearly $-I \in \Gamma_0(N)$ for every level $N$ and hence $g$ vanishes identically (recall that $\chi$ is assumed to be even throughout).
\newline
On the other hand, the inner sum can be rewritten as a sum over small divisors of $r$. To this end, the set of admissible small divisors is given by
\begin{align*}
D_r(p) \coloneqq \left\{ d\mid r \ \colon \ 1 \leq d \leq \frac{r}{d}, \quad d \equiv \frac{r}{d} \ \pmod*{2} , \quad d+\frac{r}{d} \equiv 0 \ \pmod*{2p^a} \right\},
\end{align*}
and exactly as in the proof of Theorem \ref{thm:main} we see that
\begin{align*}
\sum_{\substack{m,n \geq 1 \\ (p^a n)^2-m^2=r}} \chi(m)\psi(n)\left(m - p^a n\right)^2 = \sum_{d \in D_r(p)} \chi\left(\frac{\frac{n}{d}-d}{2}\right)\psi\left(\frac{\frac{n}{d}+d}{2p^a}\right) d^2.
\end{align*}
If we apply the operator $U(p^b)$ to $g$ then we need to replace $r$ by $p^b r$ everywhere above. This produces the condition
\begin{align*}
d+\frac{p^b r}{d} \equiv 0 \pmod*{2p^a}
\end{align*}
in the set of admissible small divisors, which eventually forces
\begin{align*}
d \equiv 0 \ \pmod*{p^{\textnormal{min}(a,b)}},
\end{align*}
since $d$ is a divisor of $r$. Combining, we arrive at
\begin{align*}
0 = g(\tau) = g(\tau) \Big\vert \ U(p^b) \equiv \left(\tp\left(p^{2a}\tau\right)\Fcp(\tau)\right) \Big\vert \ U(p^b) \ \pmod*{p^{\textnormal{min}(a,b)}},
\end{align*}
as claimed.
\newline
The proof of the second claim is completely analogous. The character $\chi$ is trivial, $\Mc = 1$, and one can remove the condition $d \equiv \frac{r}{d} \ \pmod*{2}$ from the definition of the set of admissible small divisors. However, this does not affect the rest of the proof and we provided the necessary computations during the proof of Theorem \ref{thm:varmain} essentially.
\end{proof}

\section{Proof of Proposition \ref{Prop:parthetalerch} and of Proposition \ref{Prop:alprop}} \label{sec:alparproof}

\begin{proof}[Proof of Proposition \ref{Prop:alprop}:]
The first step is to apply geometric expansion. We compute
\begin{align*}
\sum_{n \geq 1} \sdf(n) q^n &= \sum_{m \geq 1} \sum_{n-m \geq 1} \chi(m)\psi(n)\left(n-m\right)^2q^{n^2-m^2} = \sum_{m \geq 1} \sum_{s \geq 1} \chi(m)\psi(m+s) s^2 q^{s^2+2ms} \\
&= \sum_{s \geq 1} \sum_{a \geq 0} \sum_{b=0}^{\Mc-1} \chi(b)\psi(s+b)s^2q^{s^2+2(a\Mc+b)s} - \chi(0)\psi(s+b)s^2q^{s^2} \\
&= \sum_{b=1}^{\Mc-1} \chi(b) \sum_{s \geq 1} \psi(s+b)s^2 \frac{q^{s^2+2bs}}{1-q^{2\Mc s}},
\end{align*}
where we have used the assumption $\Mp \mid \Mc$ after the substitution $m = a\Mc+b$ and the assumption that $\chi$ is non-trivial in the last equation.
The second step is to convert the sum in $s$ to a sum over $\Z$ instead of $\N$. Note that
\begin{align*}
2 \sum_{s \geq 1} \psi(s+b) s^2\frac{q^{s^2+2bs}}{1-q^{2\Mc s}} &= \sum_{s \geq 1} \psi(s+b)s^2 \frac{q^{s^2+2bs}}{1-q^{2\Mc s}} + \sum_{s \leq -1} \psi(-s+b)s^2 \frac{q^{s^2-2bs}}{1-q^{-2\Mc s}} \\
&= \sum_{s \geq 1} \psi(s+b) s^2\frac{q^{s^2+2bs}}{1-q^{2\Mc s}} + \sum_{s \leq -1} \psi(s-b) s^2\frac{q^{s^2+2\Mc s-2bs}}{1-q^{2\Mc s}},
\end{align*}
using that $\psi$ is odd. The key observation is
\begin{align*}
\sum_{b=1}^{\Mc-1} \chi(b) \sum_{s \leq -1} \psi(s-b)s^2 \frac{q^{s^2+2\Mc s-2bs}}{1-q^{2\Mc s}} &= \sum_{b=1}^{\Mc-1} \chi(\Mc-b) \sum_{s \leq -1} \psi(s-(\Mc-b)) s^2\frac{q^{s^2+2bs}}{1-q^{2\Mc s}} \\
&= \sum_{b=1}^{\Mc-1} \chi(b) \sum_{s \leq -1} \psi(s+b)s^2 \frac{q^{s^2+2bs}}{1-q^{2\Mc s}}.
\end{align*}
using that $\chi$ is even. Thus, we have
\begin{align*}
\sum_{n \geq 1} \sdf(n) q^n = \frac{1}{2} \sum_{b=1}^{\Mc-1} \chi(b) \sum_{s \in \Z} \psi(s+b)s^2 \frac{q^{s^2+2bs}}{1-q^{2\Mc s}},
\end{align*}
since the constant term in $s$ vanishes as well. The third step is to substitute $s = n\Mc+c$ to isolate $\psi$ from its $s$-dependency (recall $\Mp \mid \Mc$), getting
\begin{align*}
& \sum_{n \geq 1} \sdf(n) q^n = \frac{1}{2} \sum_{b=1}^{\Mc-1} \chi(b)\sum_{c=0}^{\Mc-1}\psi(b+c) \sum_{n \in \Z} (n\Mc+c)^2 \frac{q^{(n\Mc+c)^2+2b(n\Mc+c)}}{1-q^{2\Mc(n\Mc+c)}},
\end{align*}
from which we read off the claim.
\end{proof}

\begin{proof}[Proof of Proposition \ref{Prop:parthetalerch} (i):]
We provide a purely computational proof. To this end, let
\begin{align*}
J(\tau) \coloneqq \int_{-\frac{1}{2}}^{\frac{1}{2}} \mathrm{e}^{2\pi i (2t)} A_2\left(t-\frac{\tau}{2},0;\tau\right) dt.
\end{align*}
We split the sum defining $A_2$ into its positive, constant, and negative summands (with respect to the summation variable $n$). Next, we note that $\vert \mathrm{e}^{\pm 2\pi i t} q^{n \mp \frac{1}{2}}\vert < 1$ for every $n \geq 1$ and $t \in \R$, which enables us to apply geometric expansion, which yields
\begin{align*}
\sum_{n \geq 1} \frac{q^{n(n+1)}}{1-\mathrm{e}^{2\pi i \left(t-\frac{\tau}{2}\right)}q^n} &= \sum_{n \geq 1} \sum_{j \geq 0} q^{n(n+1)+jn}\mathrm{e}^{2\pi i j\left(t-\frac{\tau}{2}\right)}, \\
\sum_{n \leq -1} \frac{q^{n(n+1)}}{1-\mathrm{e}^{2\pi i \left(t-\frac{\tau}{2}\right)}q^n} &= -\sum_{n \geq 1} \frac{q^{n^2}\mathrm{e}^{-2\pi i \left(t-\frac{\tau}{2}\right)}}{1-\mathrm{e}^{-2\pi i \left(t-\frac{\tau}{2}\right)}q^{n}} = -\sum_{n \geq 1} \sum_{j \geq 0} q^{n^2+jn}\mathrm{e}^{-2\pi i (1+j) \left(t-\frac{\tau}{2}\right)}.
\end{align*}

We collect all terms, getting
\begin{align*}
J(\tau) =& \sum_{n \geq 1} \sum_{j \geq 0} \int_{-\frac{1}{2}}^{\frac{1}{2}} \mathrm{e}^{2\pi i (2t)} \mathrm{e}^{2\pi i \left(t-\frac{\tau}{2}\right)} q^{n(n+1)+jn}\mathrm{e}^{2\pi i j\left(t-\frac{\tau}{2}\right)}dt + \int_{-\frac{1}{2}}^{\frac{1}{2}} \mathrm{e}^{2\pi i (2t)} \mathrm{e}^{2\pi i \left(t-\frac{\tau}{2}\right)} \frac{1}{1-\mathrm{e}^{2\pi i \left(t-\frac{\tau}{2}\right)}}dt \\
&- \sum_{n \geq 1} \sum_{j \geq 0} \int_{-\frac{1}{2}}^{\frac{1}{2}} \mathrm{e}^{2\pi i (2t)} \mathrm{e}^{2\pi i \left(t-\frac{\tau}{2}\right)} q^{n^2+jn}\mathrm{e}^{-2\pi i (1+j) \left(t-\frac{\tau}{2}\right)}dt.
\end{align*}
The first term vanishes and the third term reduces to the case $j = 2$. Moreover, the second term simplifies to
\begin{align*}
\int_{-\frac{1}{2}}^{\frac{1}{2}} \frac{\mathrm{e}^{2\pi i (2t)} \mathrm{e}^{2\pi i \left(t-\frac{\tau}{2}\right)}}{1-\mathrm{e}^{2\pi i \left(t-\frac{\tau}{2}\right)}}dt = \int_{-\frac{1}{2}}^{\frac{1}{2}} -\mathrm{e}^{2\pi i t+\pi i \tau} - \frac{\mathrm{e}^{3\pi i \tau}}{\mathrm{e}^{2\pi i t}-\mathrm{e}^{\pi i \tau}} - q - \mathrm{e}^{2\pi i (2t)}dt = -q,
\end{align*}
and we arrive at
\begin{align*}
J(\tau) = -q - \sum_{n \geq 1} q^{n^2+2n+1} = - \sum_{n \geq 1} q^{n^2},
\end{align*}
as claimed.
\end{proof}

\begin{proof}[Proof of Proposition \ref{Prop:parthetalerch} (ii):]
We follow the original proof. Let $z^* = -\frac{\tau}{2} - \frac{1}{2}$ and $P_{z^*}$ be the fundamental parallelogram around $0$. Then $P_{z^*}$ contains no other zeros of $\vartheta(z;\tau)^2$. In the notation of \cite{brirozwe16} we set
\begin{align*}
\vartheta_{-2,0,1}^+(z;\tau) \coloneqq \sum_{n \geq 1} q^{n^2}\zeta^{2n},
\end{align*}
and the idea is to evaluate the integral
\begin{align*}
I(\tau) \coloneqq \int_{\partial P_{z^*}} \frac{\vartheta_{-2,0,1}^+(z;\tau)}{\vartheta(z;\tau)^2}dz
\end{align*}
in two different ways. On one hand, the Laurent expansion of $\frac{1}{\vartheta(z;\tau)^2}$ with respect to $z$ around $0$ has the shape
\begin{align*}
\frac{1}{\vartheta(z;\tau)^2} = \frac{f_{-2}(\tau)}{(2\pi i z)^2} + \frac{f_{-1}(\tau)}{2\pi i z} + O(1),
\end{align*}
and thus
\begin{align*}
I(\tau) &= 2\pi i \text{Res}\left(0, \frac{\vartheta_{-2,0,1}^+(\ \cdot \ ;\tau)}{\vartheta(\ \cdot \ ;\tau)^2}\right) = 2\pi i \frac{d}{dz}\left(z^2\frac{\vartheta_{-2,0,1}^+(z;\tau)}{\vartheta(z;\tau)^2}\right)\Bigg\vert_{z=0} \\
&= f_{-1}(\tau)\vartheta_{-2,0,1}^+(0;\tau) + f_{-2}(\tau)\left(\frac{1}{2\pi i}\frac{d}{dz}\vartheta_{-2,0,1}^+(z;\tau)\right)\Bigg\vert_{z=0} = f_{-1}(\tau) \sum_{n \geq 1} q^{n^2} + 2f_{-2}(\tau)\sum_{n \geq 1} nq^{n^2}.
\end{align*}

On the other hand, the integrand is one-periodic in $z$, and hence the integrals along the vertical edges cancel each other. Moreover, we have the elliptic transformation property
\begin{align*}
\vartheta(z+\lambda\tau+\mu;\tau) = (-1)^{\lambda+\mu}\mathrm{e}^{-\pi i\left(\lambda^2\tau+2\lambda z\right)}\vartheta(z;\tau)
\end{align*}
for any $\lambda, \mu \in \Z$. Combining yields
\begin{align*}
I(\tau) &= \int_{z^*}^{z^*+1} \frac{\vartheta_{-2,0,1}^+(z;\tau)}{\vartheta(z;\tau)^2}dz - \int_{z^*+\tau}^{z^*+\tau+1} \frac{\vartheta_{-2,0,1}^+(z;\tau)}{\vartheta(z;\tau)^2}dz \\
&= \int_{z^*}^{z^*+1} \left(\frac{\vartheta_{-2,0,1}^+(z;\tau)}{\vartheta(z;\tau)^2} - \frac{\vartheta_{-2,0,1}^+(z+\tau;\tau)}{\vartheta(z+\tau;\tau)^2} \right) dz \\
&= \int_{z^*}^{z^*+1} \left(\frac{\vartheta_{-2,0,1}^+(z;\tau)}{\vartheta(z;\tau)^2} - \mathrm{e}^{2\pi i (\tau+2z)}\frac{\vartheta_{-2,0,1}^+(z+\tau;\tau)}{\vartheta(z;\tau)^2} \right) dz.
\end{align*}
The last step is to compute
\begin{align*}
\vartheta_{-2,0,1}^+(z;\tau) - q\zeta^2\vartheta_{-2,0,1}^+(z+\tau;\tau) &= \sum_{n \geq 1} q^{n^2}\zeta^{2n} - \sum_{n \geq 1} q^{(n+1)^2}\zeta^{2(n+1)} = q\zeta^2.
\end{align*}
Hence,
\begin{align*}
I(\tau) = q\int_{z^*}^{z^*+1} \frac{\mathrm{e}^{2\pi i (2z)}}{\vartheta(z;\tau)^2}dz = \int_{-\frac{1}{2}}^{\frac{1}{2}} \frac{\mathrm{e}^{2\pi i (2t)}}{\vartheta\left(t-\frac{\tau}{2};\tau\right)^2}dt.
\end{align*}
This completes the proof.
\end{proof}

\begin{bibsection}
\begin{biblist}

\bib{alfesneumann2020cycle}{article}{
	author={Alfes-Neumann, C.},
	author={Bringmann, Kathrin},
	author={Males, J.},
	author={Schwagenscheidt, M.},
	title={Cycle integrals of meromorphic modular forms and coefficients of
		harmonic Maass forms},
	journal={J. Math. Anal. Appl.},
	volume={497},
	date={2021},
	number={2},
	pages={124898, 15},
}

\bib{anrhzwe}{article}{
   author={Andrews, G. E.},
   author={Rhoades, R. C.},
   author={Zwegers, S.},
   title={Modularity of the concave composition generating function},
   journal={Algebra Number Theory},
   volume={7},
   date={2013},
   number={9},
   pages={2103--2139},
}

\bib{thebook}{book}{
    AUTHOR = {Bringmann, K.},
    AUTHOR = {Folsom, A.},
    AUTHOR = {Ono, K.},
    AUTHOR = {Rolen, L.},
     TITLE = {Harmonic {M}aass forms and mock modular forms: theory and
              applications},
    SERIES = {American Mathematical Society Colloquium Publications},
    VOLUME = {64},
 PUBLISHER = {American Mathematical Society, Providence, RI},
      YEAR = {2017},
     PAGES = {xv+391},
}

\bib{brikazwe}{article}{
    AUTHOR = {Bringmann, K.},
    AUTHOR = {Kane, B.},
    AUTHOR = {Zwegers, S.},
     TITLE = {On a completed generating function of locally harmonic {M}aass
              forms},
   JOURNAL = {Compos. Math.},
    VOLUME = {150},
      YEAR = {2014},
    NUMBER = {5},
     PAGES = {749--762},
}

\bib{brinaz}{article}{
   author={Bringmann, K.},
   author={Nazaroglu, C.},
   title={A framework for modular properties of false theta functions},
   journal={Res. Math. Sci.},
   volume={6},
   date={2019},
   number={3},
   pages={Paper No. 30, 23},
}

\bib{briro16}{article}{
	author={Bringmann, K.},
	author={Rolen, L.},
	title={Half-integral weight Eichler integrals and quantum modular forms},
	journal={J. Number Theory},
	volume={161},
	date={2016},
	pages={240--254},
}

\bib{brirozwe16}{article}{
	author={Bringmann, K.},
	author={Rolen, L.},
	author={Zwegers, S.},
	title={On the Fourier coefficients of negative index meromorphic Jacobi
		forms},
	journal={Res. Math. Sci.},
	volume={3},
	date={2016},
	pages={Paper No. 5, 9},
}

\bib{brufu}{article}{
   author={Bruinier, J. H.},
   author={Funke, J.},
   title={On two geometric theta lifts},
   journal={Duke Math. J.},
   volume={125},
   date={2004},
   number={1},
   pages={45--90},
}

\bib{bruinier2020theta}{article}{
	title={Theta lifts for Lorentzian lattices and coefficients of mock theta functions},
	author={Bruinier, J. H.},
	author={Schwagenscheidt, M.},
	journal={Math. Z.},
	pages={1--25},
	year={2020},
	publisher={Springer},
}

\bib{the123}{collection}{
   author={Bruinier, J. H.},
   author={van der Geer, G.},
   author={Harder, G.},
   author={Zagier, D.},
   title={The 1-2-3 of modular forms},
   series={Universitext},
   note={Lectures from the Summer School on Modular Forms and their
   Applications held in Nordfjordeid, June 2004;
   Edited by Kristian Ranestad},
   publisher={Springer-Verlag, Berlin},
   date={2008},
   pages={x+266},
}

\bib{damuza}{webpage}{
   author={Dabholkar, A.},
   author={Murthy, S.},
   author={Zagier, D.},
   title={Quantum black holes, wall crossing, and mock modular forms},
   url={https://arxiv.org/abs/1208.4074},
   year={2012},
   note={to appear in Cambridge Monographs in Mathematical Physics}
}

\bib{eiza}{book}{
   author={Eichler, M.},
   author={Zagier, D.},
   title={The theory of Jacobi forms},
   series={Progress in Mathematics},
   volume={55},
   publisher={Birkh\"{a}user Boston, Inc., Boston, MA},
   date={1985},
   pages={v+148},
}

\bib{table}{book}{
    AUTHOR = {Gradshteyn, I. S.},
    AUTHOR = {Ryzhik, I. M.},
     TITLE = {Table of integrals, series, and products},
   EDITION = {Seventh},
      NOTE = {Translated from the Russian,
              Translation edited and with a preface by Alan Jeffrey and
              Daniel Zwillinger,
              With one CD-ROM (Windows, Macintosh and UNIX)},
 PUBLISHER = {Elsevier/Academic Press, Amsterdam},
      YEAR = {2007},
     PAGES = {xlviii+1171},
}

\bib{hecke}{article}{
   author={Hecke, E.},
   title={Theorie der Eisensteinschen Reihen h\"{o}herer Stufe und ihre
   Anwendung auf Funktionentheorie und Arithmetik},
   language={German},
   journal={Abh. Math. Sem. Univ. Hamburg},
   volume={5},
   date={1927},
   number={1},
   pages={199--224},
}

\bib{holopro}{article}{
   author={Imamo\u{g}lu, \"{O}.},
   author={Raum, M.},
   author={Richter, O. K.},
   title={Holomorphic projections and Ramanujan's mock theta functions},
   journal={Proc. Natl. Acad. Sci. USA},
   volume={111},
   date={2014},
   number={11},
   pages={3961--3967},
}

\bib{iwaniec}{book}{
   author={Iwaniec, H.},
   title={Topics in classical automorphic forms},
   series={Graduate Studies in Mathematics},
   volume={17},
   publisher={American Mathematical Society, Providence, RI},
   date={1997},
   pages={xii+259},
}

\bib{etatheta}{article}{
   author={Lemke-Oliver, R. J.},
   title={Eta-quotients and theta functions},
   journal={Adv. Math.},
   volume={241},
   date={2013},
   pages={1--17},
}

\bib{mersthesis}{thesis}{
   author={Mersmann, G.},
   title={Holomorphe $\eta$-produkte und nichtverschwindende ganze modulformen für $\Gamma_0(N)$},
   type={Diploma Thesis},
   organization={Universität Bonn},
   date={1991},
}

\bib{mertens2019mock}{article}{
    title={Mock modular Eisenstein series with Nebentypus},
    author={Mertens, M. H.},
    AUTHOR = {Ono, K.},
    AUTHOR = {Rolen, L.},
	note={to appear},
	journal={Int. J. Number Theory (special issue for Bruce Berndt's 80th birthday conference)},
}

\bib{mertens2016}{article}{
   author={Mertens, M. H.},
   title={Eichler-Selberg type identities for mixed mock modular forms},
   journal={Adv. Math.},
   volume={301},
   date={2016},
   pages={359--382},
}

\bib{mertens2014}{article}{
   author={Mertens, M. H.},
   title={Mock modular forms and class number relations},
   journal={Res. Math. Sci.},
   volume={1},
   date={2014},
   pages={Art. 6, 16},
}

\bib{rouwe}{article}{
   author={Rouse, J.},
   author={Webb, J. J.},
   title={On spaces of modular forms spanned by eta-quotients},
   journal={Adv. Math.},
   volume={272},
   date={2015},
   pages={200--224},
}

\bib{stein}{book}{
	author={Stein, William},
	title={Modular forms, a computational approach},
	series={Graduate Studies in Mathematics},
	volume={79},
	note={With an appendix by Paul E. Gunnells},
	publisher={American Mathematical Society, Providence, RI},
	date={2007},
	pages={xvi+268},
}

\bib{zagiereis}{article}{
   author={Zagier, D.},
   title={Nombres de classes et formes modulaires de poids $3/2$},
   language={French, with English summary},
   journal={C. R. Acad. Sci. Paris S\'{e}r. A-B},
   volume={281},
   date={1975},
   number={21},
   pages={Ai, A883--A886},
}

\bib{zagierquant}{article}{
   author={Zagier, D.},
   title={Quantum modular forms},
   conference={
      title={Quanta of maths},
   },
   book={
      series={Clay Math. Proc.},
      volume={11},
      publisher={Amer. Math. Soc., Providence, RI},
   },
   date={2010},
   pages={659--675},
}


\bib{zwegersthesis}{thesis}{
   author={Zwegers, S.},
   title={Mock theta functions},
   type={Ph.D. Thesis},
   organization={Universiteit Utrecht},
   date={2002},
}

\bib{zwegers19}{article}{
   author={Zwegers, S.},
   title={Multivariable Appell functions and non-holomorphic Jacobi forms},
   journal={Res. Math. Sci.},
   volume={6},
   date={2019},
   number={1},
   pages={Paper No. 16, 15},
}

\end{biblist}
\end{bibsection}


\end{document}